\documentclass[12pt]{amsart}
\usepackage{amsmath,amsthm,amsfonts,amssymb,times,latexsym,mathabx,url,bbm}

\voffset=-10mm
\oddsidemargin=0mm
\evensidemargin=0mm
\textwidth=160mm
\textheight=220mm

\newtheorem*{theorem*}{Theorem}
\newtheorem{theorem}{Theorem}

\newtheorem{lem}{Lemma}[section]

\numberwithin{equation}{section}

\DeclareMathOperator{\AP}{AP}

\makeatletter
\renewcommand{\pmod}[1]{\allowbreak\mkern7mu({\operator@font mod}\,\,#1)}
\makeatother

\newcommand{\A}{\mathcal{A}}

\renewcommand{\d}{\delta}
\newcommand{\eps}{\varepsilon}
\newcommand{\D}{\mathcal D}

\renewcommand{\l}{\lambda}

\newcommand{\N}{\mathbb{N}}
\renewcommand{\o}{\omega}

\renewcommand{\P}{\mathbb P}

\newcommand{\s}{\sigma}

\newcommand{\U}{\mathcal U}
\newcommand{\V}{\mathcal V}
\newcommand{\Z}{\mathbb{Z}}
\newcommand{\HH}{\mathfrak H}

\renewcommand{\b}{\mathbf b}
\newcommand{\e}{\mathbf e}
\newcommand{\n}{\mathbf n}
\renewcommand{\S}{\mathbf S}
\newcommand{\bAP}{\mathbf{AP}}
\newcommand{\bl}{\boldsymbol{\lambda}}
\newcommand{\Q}{\mathcal Q}

\newcommand{\bcR}{\boldsymbol{\mathcal{R}}}
\newcommand{\mE}{\mathcal E}
\newcommand{\bmE}{\boldsymbol{\mathcal E}}
\newcommand{\mF}{\mathcal F}
\newcommand{\bmF}{\boldsymbol{\mathcal F}}

\renewcommand{\leq}{\leqslant}
\renewcommand{\geq}{\geqslant}

%
%


\newcommand{\NN}{{\mathbb N}}


\newcommand{\cA}{\ensuremath{\mathcal{A}}}

\newcommand{\cM}{\ensuremath{\mathcal{M}}}
\newcommand{\cN}{\ensuremath{\mathcal{N}}}

\newcommand{\cQ}{\ensuremath{\mathcal{Q}}}
\newcommand{\cR}{\ensuremath{\mathcal{R}}}


%
%



\newcommand{\br}{\ensuremath{\mathbf{r}}}

\newcommand{\bu}{\ensuremath{\mathbf{u}}}


\newcommand{\E}{\mathbb{E}}

\makeatletter
\renewcommand{\pmod}[1]{\allowbreak\mkern7mu({\operator@font mod}\,\,#1)}
\makeatother
\newcommand{\be}{\begin{equation}}
\newcommand{\ee}{\end{equation}}

\renewcommand{\ssum}[1]{\sum_{\substack{#1}}}  




\renewcommand{\le}{\leqslant}
\renewcommand{\leq}{\leqslant}
\renewcommand{\ge}{\geqslant}
\renewcommand{\geq}{\geqslant}

\newcommand{\fl}[1]{{\ensuremath{\left\lfloor {#1} \right\rfloor}}}  
\renewcommand{\(}{\left(}
\renewcommand{\)}{\right)}



\newcommand{\one}{\ensuremath{\mathbbm{1}}}  


\begin{document}
	
\title{Long strings of consecutive composite values of polynomials}
\author{Kevin Ford, Mikhail R. Gabdullin}
\date{}
\address{Department of mathematics, 1409 West Green Street, University of Illinois at Urbana-Champaign, Urbana, IL 61801, USA}
\email{ford@math.uiuc.edu}

\address{Department of mathematics, 1409 West Green Street, University of Illinois at Urbana-Champaign, Urbana, IL 61801, USA; Steklov Mathematical Institute,
Gubkina str., 8, Moscow, 119991, Russia}
\email{gabdullin.mikhail@yandex.ru, mikhailg@illinois.edu}

\begin{abstract}
We show that for any polynomial $f:\Z\to \Z$ with positive leading coefficient and
irreducible over $\mathbb{Q}$, if $x$ is large enough then there is a string of
$(\log x)(\log\log x)^{1/835}$ consecutive integers $n\in [1,x]$ for which
$f(n)$ is composite.  This improves the result in \cite{sieved}, which 
states that there are such strings of length $(\log x)(\log\log x)^{c_f}$,
where $c_f$ depends on $f$ and $c_f$ is exponentially
small in the degree of $f$ for some polynomials.  
\end{abstract}

\thanks{2010 Mathematics Subject Classification: Primary 11N35, 11N32, 11B05}

\thanks{Keywords and phrases: gaps, prime values of polynomials, sieves}

\thanks{KF was supported by National Science Foundation Travel Support grant DMS-2301264
and Simons Foundation grant MP-TSM-00002373.}

\date{\today}
\maketitle	
	
\section{Introduction} 	

The first author, together with Konyagin, Maynard, Pomerance and Tao, showed in
 \cite{sieved} that a general ``sieved set'' contains long gaps.  More precisely,
 for each prime $p$ consider a set $I_p$ of residue classes modulo $p$,
 and call the collection of all sets $I_p$ a \emph{sieving system}.
As in \cite{sieved}, assume the following regularity conditions:
 \begin{itemize}
 \item[(a)] We have $|I_p|\le p-1$ for all $p$;
\item[(b)] $|I_p|$ is bounded; there is a $B\in \NN$ with $|I_p|\le B$ for all $p$;
\item[(c)] $|I_p|$ has average value 1, in the sense that
\be\label{Mertens}
\prod_{p\le x} \(1-\frac{|I_p|}{p}\) \sim \frac{C_1}{\log x}  \qquad (x\to\infty),
\ee
for some constant $C_1>0$. 
\item[(d)] There is a $\rho>0$, so that
the density of primes with $|I_p|\ge 1$ equals $\rho$, that is,
\[
\lim_{x\to\infty} \frac{|\{p\le x : |I_p|\ge 1\}|}{x/\log x}=\rho.
\]
\end{itemize}

Now define the \emph{sieved set}
\[
S_x:= \Z\setminus \bigcup_{p\le x} I_p,
\]
so that $S_x$ is a periodic set with period equal to the product of
the primes $p\le x$. The main theorem from \cite{sieved} states that if (a)--(d) hold, then for any $\eps>0$ and large $x$, the set $S_x$ contains a gap of size $x (\log x)^{C(\rho)-\eps}$, where

\begin{equation}\label{Crho-def} 
C(\rho) :=\sup\Big\{\d>0: \frac{6\cdot 10^{2\delta}}{\log(1/(2\d))}<\rho\Big\}.	
\end{equation}

In particular, $C(\rho)$ decays exponentially in $1/\rho$.
One of the principal applications of this result in \cite{sieved} is to finding
long strings of consecutive composite values of polynomial sequences. 
Consider a polynomial $f:\Z\to\Z$ of degree $d \geq 1$, with positive leading
coefficient and irreducible over $\mathbb{Q}$.
Let $I_p=\emptyset$ for $p\le d$ and 
\[
 I_p := \{ n \in \Z/p\Z: f(n)\equiv 0\pmod{p} \} \qquad (p>d).
 \]
The polynomial need not have integer coefficients.  Indeed, by P\'olya's theorem \cite{Polya},
$f$ is integer valued at integers if and only if $f$ 
has the form $f(x)=\sum_{j=0}^d a_j \binom{x}{j}$ with every $a_j\in \Z$.  In particular, $d! f(y) \in \Z[y]$ and
thus the sieving system is well-defined.
We call the collection of all sets $I_p$ a \emph{polynomial sieving system.}

By Lagrange's theorem, $|I_p| \le d < p $ for all $p>d$,
and hence (a) and (b) hold.  Item (c) holds 
by Landau's Prime
Ideal Theorem \cite{Landau} (see also \cite[pp. 35--36]{CoMu}), 
while (d) 
 follows from the Chebotarev Density
Theorem \cite{Chebotarev} (see also \cite{lagodl}),
with $\rho=\rho(f)$ equal to $C/|G|$, where $G$ is the Galois group of $f$,
a subgroup of the symmetric group on $d$ objects,
and $C$ is the number of elements of $G$ having at least one fixed point.
We have $\rho(f)\ge 1/d$ always, and sometimes $\rho(f)=1/d$.

\textbf{Examples.}  When $f(x)=x^2+1$, we have $|I_2|=1$, $|I_p|=2$ for all 
$p\equiv 1\pmod{4}$ and $|I_p|=0$ for $p\equiv 3\pmod4$.  Thus, $\rho(f)=1/2$.
Similarly, if $d=2^k$ for a positive integer $k$ and $f(x)=x^d+1$, then $|I_p|=d$ for
$p\equiv 1 \pmod{2^{k+1}}$ and $|I_p|=0$ for all other odd primes $p$,
hence $\rho(f)=1/d=1/2^k$.

The connection with sieved sets comes from the obvious relation
\begin{equation*}
 \{ n \in \N: f(n)>x, f(n) \text{ prime} \} \subset S_x.
\end{equation*}
Now let $X$ be large and set $x := \frac{1}{2} \log X$.  
The period of the set $S_x$ is $X^{1/2+o(1)}$ by the Prime Number Theorem.  Thus, 
writing $g_x$ for the longest gap in $S_x$, 
the interval $[X/2,X]$ contains a gap in $S_x$ of length $g_x$.
For large $X$, $f(n)>x$ for all $n\in (X/2,X]$, and we conclude that
\be\label{large-polynomial-gaps-from-Sx}
\max \{ m: X/2 < n < n+m \le X \text{ and }f(n),\ldots,f(n+m)\text{ composite} \} \ge g_{(1/2)\log X}.
\ee

The main theorem of \cite{sieved} implies that the left side
of \eqref{large-polynomial-gaps-from-Sx} is $\ge (\log X)(\log\log X)^{C(\rho(f))-o(1)}$.
In particular, we have \cite[Corollary 1]{sieved}, which states that
the left side of \eqref{large-polynomial-gaps-from-Sx}
is at least of size $\ge (\log X)(\log\log X)^{C(1/d)-o(1)}$.
The exponent here decays exponentially in $d$ (roughly $C(1/d) \approx e^{-6d}$).

It is still an open conjecture (of Bunyakovsky \cite{bun}) that there are infinitely many integers $n$ for which $f(n)$ is prime. Moreover it is believed (see the conjecture of Bateman and Horn \cite{BatemanHorn}) that the density of these prime values on $[X/2,X]$ is $\asymp_f 1/\log{X}$, and so the gaps above would be unusually large compared to the average gap of size $\asymp_f \log{X}$.

\bigskip

Our main result is a stronger lower bound on the length of strings of
consecutive composite values of polynomials, with exponent of $\log\log x$
being independent of $f$.

\begin{theorem}\label{thm:main}
Let $f$ be as above.  For all $\eps>0$ and large $X$, there are $n,n+m\in (X/2,X]$
with $m \ge (\log X)(\log\log X)^{C(1)-\eps}$ and with $f(n),\ldots,f(n+m)$
all composite.
\end{theorem}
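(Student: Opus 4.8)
The plan is to stay inside the sieving framework of \cite{sieved} but to remove the two places where the density $\rho=\rho(f)$ enters the final exponent. Set $x:=\tfrac13\log X$ (any small fixed multiple of $\log X$ would do) and $m:=\lfloor x(\log x)^{C(1)-\eps}\rfloor$, so $m\asymp(\log X)(\log\log X)^{C(1)-\eps}$; it suffices to produce $n_0$ with $X/2<n_0\le n_0+m\le X$ and $f(n_0),\dots,f(n_0+m)$ all composite, since then $p\mid f(n_0+j)$ with $p\le x$ forces compositeness (as $f(n_0+j)\gg X^d\gg x$). We seek $n_0$ in a fixed residue class modulo $W:=\prod_{p\in\cP}p$, where $\cP$ is a set of primes $\le x$, so $W\le\prod_{p\le x}p=X^{1/3+o(1)}$ and $\gg X^{2/3-o(1)}$ admissible $n_0$ remain. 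First, using the primes $p\le z_0$ with $z_0=z_0(x)\to\infty$ slowly, together with condition (c) (equivalently Landau's Prime Ideal Theorem), fix congruences $n_0\equiv b_p\pmod p$ with each $b_p$ a root of $f$ modulo $p$; keeping \emph{all} of $I_p$, the set of indices $j\in\{0,\dots,m\}$ with no prime $p\le z_0$ dividing $f(n_0+j)$ has size $\ll m/\log z_0$, the classical count.

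The main step is the covering. Group the remaining primes $z_0<p\le x$ into dyadic blocks and run the iterative covering of \cite{sieved} on the surviving indices, but measure the capacity of a block $(P,2P]$ by $\sum_{P<p\le 2P}|I_p|/p$, which by (c) equals $(1+o(1))\log 2$ --- the same as for the rational primes, and independent of $\rho$. The point is that although this capacity is now carried by a density-$\rho$ subfamily of primes, each typically with $|I_p|\asymp 1/\rho$ roots, one may exploit \emph{all} $|I_p|$ residues of each such $p$ simultaneously: the events ``$p\mid f(n_0+j)$'' for the $\asymp\ell/P$ surviving indices $j$ in an interval of length $\ell$ depend only on $n_0\bmod P$, and the first- and second-moment estimates driving the covering go through with $|I_p|$ in place of $1$, the extra factors being absorbed into the constant $6\cdot 10^{2\delta}$ of \eqref{Crho-def}. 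Thus the recursion is governed by the $\rho=1$ instance of \eqref{Crho-def}, and iterating it (using $m\le x(\log x)^{C(1)-\eps}$) leaves a survivor set $J'$ with $|J'|\ll\log X/\log\log X$, the implied constant as small as we please by taking $X$ large.

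It remains to choose $n_0$ within its residue class so that $f(n_0+j)$ is composite for the few $j\in J'$; here we replace the large-prime mop-up of \cite{sieved} (which would cost a factor $\rho$) by a rare-event argument. For $j\in J'$ the value $f(n_0+j)$ is composite unless prime, and $n_0+j\bmod W$ avoids every root of $f$ modulo every $p\mid W$ precisely because $j$ survived. Hence, by an upper-bound (Brun) sieve for prime values of $f$ along an arithmetic progression,
\[
\#\bigl\{\,n_0\equiv n^\ast\!\!\pmod W,\ X/2<n_0\le X:\ f(n_0+j)\text{ prime}\,\bigr\}\ \ll_f\ \frac{X}{W\log X}\prod_{p\mid W}\Bigl(1-\frac{|I_p|}{p}\Bigr)^{-1}\ \ll_f\ \frac{X}{W}\cdot\frac{\log\log X}{\log X},
\]
using (c) once more for $\prod_{p\mid W}(1-|I_p|/p)^{-1}\le\prod_{p\le x}(1-|I_p|/p)^{-1}\sim(\log x)/C_1$. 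Summing over $j\in J'$ and using $|J'|\ll\log X/\log\log X$ with a small enough implied constant, the number of admissible $n_0$ spoiling some $f(n_0+j)$, $j\in J'$, is at most half of the $\gg X^{2/3-o(1)}$ admissible values; so a good $n_0$ exists, and for it $f(n_0),\dots,f(n_0+m)$ are all composite, proving Theorem~\ref{thm:main}.

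The hard part is the main step: verifying that concentrating the (fixed, average-one) local sieve mass on ``heavy'' primes --- those with $|I_p|$ as large as $1/\rho$ --- does not degrade the covering, so that the exponent is genuinely $C(1)$ and not $C(\rho)$. The structural facts enabling this are that (c) pins each dyadic block's capacity to $\log 2$, and that all $|I_p|$ roots of a heavy prime are simultaneously usable; one must check that their being congruences to a common modulus does not create correlations the moment estimates cannot absorb, and that the per-stage losses stay inside the constant $6\cdot 10^{2\delta}$ uniformly along the recursion. A lesser point is the uniformity of the upper-bound sieve above over moduli $W$ as large as $X^{1/3+o(1)}$, which is comfortably within the Brun range for the shifted polynomial $f$.
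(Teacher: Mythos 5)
Your high-level strategy coincides with the paper's: keep the exponent at $C(1)$ by exploiting all $|I_q|$ roots of $f$ modulo each medium prime $q$ in the greedy/covering stage, rather than the single root used in \cite{sieved}. But there is a genuine gap exactly at the point you yourself flag as ``the hard part.'' You assert that the first- and second-moment estimates driving the covering ``go through with $|I_p|$ in place of $1$, the extra factors being absorbed into the constant,'' and that one ``must check'' that using several congruence classes to a common modulus does not create correlations the moment estimates cannot absorb. That check is the entire new content of the argument, and it is not automatic. Once the weight attached to $q$ covers a union of $\nu_q$ progressions $n+a_{i,q}+hq$, the second moments produce correlation terms $E_A(v-v';H)$ over pairs $v,v'$ lying in \emph{different} progressions to the same modulus $q$; such differences satisfy $v-v'\bmod q\in I_q-I_q$, and one must show that a fixed nonzero integer lies in $I_p-I_p$ for very few primes $p$, uniformly enough to sum over $q$. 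The paper isolates this as Hypotheses (f) and (g) and verifies them for polynomial systems by constructing the auxiliary polynomial $F(x)=c^{d^2+d}\prod_{i,j}(x-(r_i-r_j))$ of Lemma \ref{lem:F} --- an integer polynomial, nonvanishing at nonzero integers, divisible by $p$ whenever $v\bmod p\in I_p-I_p$ --- combined with Erd\H{o}s's divisor bound $\sum_{k\le x}\tau(|g(k)|)\ll x\log x$ (Lemma \ref{lem:Erdos}) to control the exceptional primes; these feed into the crucial Lemma \ref{lem:sumE}. Without some such structural input about difference sets of roots, the error terms in the covering are not under control and the recursion is not ``governed by the $\rho=1$ instance'' as you claim. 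You also need the refined equidistribution statement that each $\{p:|I_p|=\nu\}$ has a density (Hypothesis (e), from Chebotarev), not merely the average-one condition (c), to get the main terms right when the primes are stratified by $|I_p|$.

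A secondary remark: your clean-up stage genuinely differs from the paper's, which matches each surviving element to a unique prime $q\in(x/2,x]$ with $|I_q|\ge1$ and forces divisibility, rather than arguing that $f(n_0+j)$ is rarely prime via an upper-bound sieve over the progression mod $W$. Your variant looks workable in outline (the survivor set can indeed be made $o(\log X/\log\log X)$ in the relevant sense, and the sieve is well within range for $W\le X^{1/3+o(1)}$), but it does not compensate for the missing correlation control, on which the proof stands or falls.
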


Numerically, $1/C(1) = 834.109\ldots$.
In particular, when $f(x)=x$, our bound falls well short of the best known lower bound
for the maximal gap between the primes below $x$ from \cite{FGKMT}, which is
\[
\gg (\log X)(\log\log X) \frac{\log\log\log\log X}{\log\log\log X}.
\]
However, as noted in \cite{sieved}, the methods used to find large gaps between
 primes do not apply to gaps in more general sieved sets.

\bigskip 

Our proof is based on the method developed in \cite{sieved}, but with one important
difference.  In \cite{sieved}, only one of the elements of $I_q$ is utilized for
large $q$ (for those $q$ with $|I_q|\ge 1$), whereas in this paper we utilize all
of the set $I_q$.  This introduces a number of complications, which we get around using
special properties of polynomial sequences.  Our methods do not apply for all
of the sieved sets considered in \cite{sieved}, but they do generalize 
to sieved sets for which the sizes of the $I_p$
have a limiting distribution and for which
the difference sets $I_p-I_p:=\{a-a' : a,a'\in I_p\}$, interpreted as subsets of $\Z$, do not have large overlap.
To state our general theorem, we introduce further conditions, Hypotheses (e), (f) and (g) (here and throughout the paper, the symbols $p$ and $q$ always denote primes):
\begin{itemize}
\item[(e)] For each $\nu \in \{1,\ldots,B\}$, the density of those $p$ with $|I_p|=\nu$
exists.  That is, for some non-negative real numbers $\rho_\nu$, $1\le \nu\le B$,
we have
\[
\lim_{x\to \infty} \frac{\# \{ p\le x: |I_p|=\nu \}}{x/\log x} = \rho_\nu.
\]
\item[(f)] For non-zero $v$, define
\[
N(v) = \# \{ p : v\bmod p \in I_p-I_p \}.
\]
Then, for all $v\ge 1$, we have $N(v) \ll v^{0.49}$.

\item[(g)]
There are  positive constants $c_1,c_2$ such that the following holds.
Let $u\ge 10$, and for each prime $q$ with $|I_q| \ge 1$, let $m_q$ be a nonzero integer with $m_q\bmod q \in (I_q-I_q)$.  If $|w|\le u$ and $k\ge 1$, then
\[
\# \big\{ q : |I_q|\ge 1, 0 <|m_q| \le u, m_q+w\ne 0, N(m_q+w) \ge k \big\} \ll u (\log u)^{c_1} e^{-c_2 k}.
\]
\end{itemize}

Hypothesis (e) is stronger than Hypothesis (d) and will replace it. 
Furthermore, (e) implies that the average of $|I_p|$, over $p\le x$,
is asymptotically $\rho_1 + 2\rho_2 + \cdots + B\rho_B$, which, by the weak average
assumption (c), equals 1.

\bigskip 

\begin{theorem}\label{thm: general}
Consider any sieving system satisfying conditions (a)--(c) and (e)--(g) above.
For any $\eps>0$ and large enough $x$, $S_x$ has a gap of size at least
$x (\log x)^{C(1)-\eps}$.
\end{theorem}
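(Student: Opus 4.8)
The plan is to run the Erd\H{o}s--Rankin construction of \cite{sieved} --- a Chinese Remainder Theorem reduction to a covering problem, followed by a multi-stage sieving of an interval --- modified so as to exploit \emph{every} residue class in $I_q$ at the final stage rather than one class per prime. Given $\eps>0$, fix $\d\in(C(1)-\eps,C(1))$, so that by the definition \eqref{Crho-def} (with $\rho=1$) we have $\tfrac{6\cdot10^{2\d}}{\log(1/(2\d))}<1$; put $y:=\fl{x(\log x)^{\d}}$ and choose a threshold $z=z(x)\to\infty$ with $z\le x$, splitting the primes $p\le x$ into the \emph{small} primes $p\le z$ and the \emph{large} primes $q\in(z,x]$, a sub-collection of the latter being held in reserve for a cleanup step. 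It suffices to choose, for each $p\le x$ with $I_p\ne\emptyset$, an integer $r_p$ so that the translated sets $(I_p-r_p)\bmod p$ together cover $\{1,\dots,y\}$: then, taking $m$ with $m\equiv r_p\pmod p$ for all such $p$, each of $m+1,\dots,m+y$ is congruent modulo some $p$ to an element of $I_p$, hence lies outside $S_x$.

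\emph{Stage 1.} Following \cite{sieved}, the small primes reduce $\{1,\dots,y\}$ to a sparse residual set
\[
\cR=\{\,j\le y:\ j\bmod p\notin(I_p-r_p)\ \text{for all}\ p\le z\,\};
\]
with the $r_p$ ($p\le z$) chosen suitably --- a random choice suffices --- the Mertens-type asymptotic \eqref{Mertens} applied at $z$ gives $|\cR|$ of order $y\prod_{p\le z}(1-|I_p|/p)$. Hypothesis (c) governs the first moment here; the survival events of two integers $j,j'$ under a single prime $p$ are correlated precisely when $(j-j')\bmod p\in I_p-I_p$, so the second-moment error terms are controlled by the quantities $N(v)$, which are $\ll v^{0.49}$ by hypothesis (f).

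\emph{Stages 2 and 3.} One now covers $\cR$ using the large primes, and here the method departs from \cite{sieved}. Rather than assigning to each $q\in(z,x]$ a single class of $I_q$, I would use all of $I_q$: a choice of $r_q$ then catches every $j\in\cR$ with $j\bmod q\in(I_q-r_q)$ --- a union of $|I_q|$ residue classes modulo $q$ --- so that $q$ behaves as $|I_q|$ ``virtual'' primes of size $q$. By hypothesis (e) together with \eqref{Mertens} the $|I_q|$ have mean $1$, so the large primes jointly furnish $\sim\pi(x)-\pi(z)$ virtual primes --- as if every prime in $(z,x]$ contributed a single class, rather than only the density-$\rho$ subset with $I_q\ne\emptyset$ used in \cite{sieved}. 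Feeding this into the covering argument of \cite{sieved} (a suitable random choice of the $r_q$ over the working large primes, followed by concentration estimates) covers all but a manageable remnant of $\cR$ under the requirement $\tfrac{6\cdot10^{2\d}}{\log(1/(2\d))}<1$ rather than $<\rho$ --- precisely the passage from exponent $C(\rho)$ to $C(1)$. Finally, in Stage 3 each $j\in\cR$ still uncovered is assigned its own reserved prime $q$ and removed via $r_q\equiv a-j\pmod q$ for some $a\in I_q$; the choice $\d<C(1)$ is exactly what keeps the remnant small enough for this to succeed. Assembling the $r_p$ from all stages and invoking CRT produces a gap of length $y\ge x(\log x)^{C(1)-\eps}$ in $S_x$.

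\emph{The main obstacle.} The crux, and the reason the technical hypotheses (f) and (g) are imposed, is the loss of independence caused by using all of $I_q$. In \cite{sieved} distinct large primes contribute genuine, essentially independent residue classes, making the covering transparent; using all of $I_q$ instead couples, through the single parameter $r_q$, the $|I_q|$ classes attached to each $q$, and makes a pair $j,j'\in\cR$ liable to be caught together by $q$ exactly when $(j-j')\bmod q\in I_q-I_q$. The covering and cleanup analyses must absorb the aggregate effect of all such ``linked pairs'' and the correlations they create, and this is feasible only if the difference sets $I_q-I_q$, regarded as subsets of $\Z$, do not overlap substantially --- quantitatively, that $N(v)\ll v^{0.49}$ (hypothesis (f)) and that for any admissible choice of representatives $m_q$ of elements of $I_q-I_q$ and any bounded shift $w$ the number of large $q$ with $N(m_q+w)\ge k$ is $\ll u(\log u)^{c_1}e^{-c_2k}$ (hypothesis (g)). Granting these, the second-moment and tail estimates needed to drive the covering and the cleanup stay under control; carrying out, with this richer sieve, the quantitative bookkeeping of \cite{sieved} is where the substance of the argument lies.
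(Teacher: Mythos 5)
Your outline correctly reproduces the architecture of the paper's argument --- the CRT reduction, a random sieving by the primes $p\le z$, a covering stage over the primes in $(z,x/2]$ that exploits all of $I_q$ at once (the source of the improvement from $C(\rho)$ to $C(1)$), and a reserved clean-up stage --- and it correctly locates where hypotheses (f) and (g) must enter, namely in controlling the correlations created by coupling the $|I_q|$ classes through a single shift $r_q$. But as written it is a strategy statement, not a proof: the step ``feeding this into the covering argument of \cite{sieved} \dots covers all but a manageable remnant'' is precisely the assertion that has to be established, and it does not follow from \cite{sieved} by substitution. The covering lemma there (Lemma \ref{lem3.1} here) requires verified first- and second-moment concentration for the random sets $\e_q$, and with all of $I_q$ in play these moments are no longer the ones computed in \cite{sieved}. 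The paper must introduce a new weight function $\bl(H;q,n)$ built on the disjoint union of $\nu_q$ progressions \eqref{APJqn}--\eqref{lambda}, split $(z,x/2]$ into scales $H\in\HH$ and into classes with $|I_q|=\nu$ fixed, and prove fresh concentration estimates (Theorems \ref{th2} and \ref{th3}) whose error analysis hinges on a new bound (Lemma \ref{lem:sumE}) that is exactly where hypothesis (g) is consumed. None of this is present or even sketched in your proposal, and your closing sentence concedes that ``the quantitative bookkeeping \dots is where the substance of the argument lies.''

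Two more concrete points. First, you never explain how the threshold $\tfrac{6\cdot 10^{2\d}}{\log(1/(2\d))}<1$ actually materializes: in the paper it comes from verifying condition \eqref{3.10}--\eqref{3.11} of the covering lemma, where the constant $C_2\sim \tfrac{K(1-1/\xi)}{(K+2)M\log\xi}\log(1/(2\d))$ arises from summing $1/\log H$ over the scales and must exceed $10^{2\d}$; this forces the choices $M\to 6$, $\xi\to 1$, $K\to\infty$, and is the only place the numerical value $C(1)$ is born. A proof must exhibit this computation. Second, your attribution of the role of (f) in Stage 1 is off: the second moment of $|\S\cap[1,y]|$ is imported unchanged from \cite{sieved}; hypothesis (f) is instead needed to bound $\sum_q \P(v,v'\in\e_q)$ for $v\ne v'$ in the covering stage (condition \eqref{3.9}) and to control the number of ``bad'' pairs $(q_1,q_2)$ in the correlation computations of Section \ref{sec:correlations}. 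So the approach is the right one, but the proposal stops exactly where the paper's new work begins.
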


Clearly, Theorem \ref{thm:main} follows from Theorem \ref{thm: general},
provided that we verify (e), (f) and (g) in the case of polynomial sieving systems.
This verification is accomplished in the next section.
The following sections are devoted to the proof of Theorem \ref{thm: general}.
As noted, the main new idea is to utilize all of the sets $I_q$ for large $q$,
which is encoded in a certain weight function; see \eqref{lambda}
for specifics. 
Hypothesis (f) will be needed at the end of
Section \ref{sec:reduction-to-concentration}
and near the end of Section \ref{sec:correlations}, while hypothesis (g) will
be needed for the proof of the crucial  Lemma \ref{lem:sumE}.

%
\section{Verifying the hypotheses of Theorem \ref{thm: general} for polynomial sieving systems}
%

Item (e) is an immediate corollary of the Chebotarev density theorem.
In fact, $\rho_\nu$ is precisely the proportion of elements of the Galois
group of $f$ which have exactly $\nu$ fixed points.

To verify (f) and (g), we introduce an auxiliary polynomial
$F$ which has roots which are the differences of the roots of $f$.
 By P\'olya's theorem \cite{Polya},
$f$ is integer valued at integers if and only if $f$ 
has the form $f(x)=\sum_{j=0}^d a_j \binom{x}{j}$ with every $a_j\in \Z$.  In particular,  there exists a minimal positive integer $t|d!$ such that $tf\in \Z[x]$. Let also $r_1,...,r_d$ be the complex roots of $f$. Writing
$$
\tilde{f}(x):=tf(x)= c x^d + c_{d-1} x^{d-1} + \cdots + c_1 x + c_0 =
c\prod_{i=1}^d(x-r_i),
$$
where $c=c_d,c_0,\ldots,c_{d-1}\in\Z$, we define the polynomial
\begin{equation}\label{F-defn} 
F(x)=c^{d^2+d}\prod_{1\leq i, j\leq d}\Big(x-(r_i-r_j)\Big)=c^{d^2}\prod_{i=1}^{d}\tilde{f}(x+r_i),
\end{equation}
so that $\deg F=d^2$. We will need the following properties of $F$.
	
\begin{lem}\label{lem:F} 
The polynomial $F$ obeys the following properties:
\begin{itemize}
\item[(i)]  $F\in\Z[x]$;
\item[(ii)] If $f(a)\equiv f(b)\equiv 0 \pmod q$ for some integers $a$ and $b$, then $F(a-b)\equiv 0\pmod q$;  
\item[(iii)] $F(l)\neq0$ for any $l\in\Z\setminus\{0\}$.
\end{itemize}
\end{lem}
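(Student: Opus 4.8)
The plan is to prove the three parts in turn; the common mechanism is that $F$ is, up to an explicit power of the leading coefficient $c$, a symmetric function of the roots $r_1,\dots,r_d$ of $\tilde f$ — equivalently a resultant — and that this construction is compatible with reduction modulo primes. For (i) I would use the second expression in \eqref{F-defn}: as polynomials in $y$, both $\tilde f(y)=c\prod_{i=1}^d(y-r_i)$ and $\tilde f(x+y)$ have degree $d$ and leading coefficient $c$, so the standard resultant identity gives
\[
\operatorname{Res}_y\!\big(\tilde f(y),\,\tilde f(x+y)\big)=c^{d}\prod_{i=1}^{d}\tilde f(x+r_i),
\]
and hence $F(x)=c^{d^2}\prod_{i=1}^d\tilde f(x+r_i)=c^{\,d^2-d}\operatorname{Res}_y\!\big(\tilde f(y),\tilde f(x+y)\big)$. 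A resultant of two polynomials in $\Z[x][y]$ is the determinant of their Sylvester matrix and hence lies in $\Z[x]$; since $d^2-d\ge 0$ this gives $F\in\Z[x]$. (Alternatively one expands $\prod_{1\le i,j\le d}(x-(r_i-r_j))$ directly: each coefficient is an isobaric integer polynomial of weight $\le d^2$ in the elementary symmetric functions $e_k(r)=(-1)^k c_{d-k}/c$, so multiplication by $c^{d^2}$, a fortiori by $c^{d^2+d}$, clears all denominators — a description that also shows every coefficient of $F$ is divisible by $c$, which is used below.)

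For (ii), fix a prime $q$. If $q\mid c$, then by the remark just made $q$ divides every coefficient of $F$, so there is nothing to prove. If $q\nmid c$, then $\tilde f\bmod q$ still has degree $d$ with nonzero leading coefficient $\bar c$, so no degree drop occurs and the resultant commutes with reduction mod $q$; writing $\tilde f(x)\equiv\bar c\prod_{i=1}^d(x-\bar r_i)\pmod q$ over $\overline{\F_q}$, we get
\[
F(x)\equiv \bar c^{\,d^2-d}\operatorname{Res}_y\!\big(\tilde f(y),\tilde f(x+y)\big)\equiv \bar c^{\,d^2}\prod_{i=1}^{d}\tilde f(x+\bar r_i)\pmod q .
\]
Since $f(a)\equiv f(b)\equiv 0\pmod q$ and $f(a),f(b)\in\Z$ (as $f$ is integer-valued), we have $q\mid\tilde f(a)$ and $q\mid\tilde f(b)$, so $a$ and $b$ both occur among the $\bar r_i$; picking an index $i$ with $\bar r_i=b$, the factor $\tilde f(x+\bar r_i)$ evaluated at $x=a-b$ equals $\tilde f(a)\equiv 0\pmod q$, whence $F(a-b)\equiv 0\pmod q$. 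The step I expect to need the most care is exactly this one — bookkeeping the powers of $c$ so that $F\in\Z[x]$, the case $q\mid c$, and the compatibility with reduction are all correctly justified; the rest is formal.

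For (iii), suppose $F(\ell)=0$ with $\ell\in\Z\setminus\{0\}$. Since $c\ne 0$, some factor $\ell-(r_i-r_j)$ vanishes, so $r_i-r_j=\ell$, and as $\ell\ne 0$ this forces $i\ne j$. Then $r_i$ is a common root of $\tilde f(x)$ and $\tilde f(x-\ell)$. Because $f$, hence $\tilde f$, is irreducible over $\QQ$, the minimal polynomial of $r_i$ over $\QQ$ is $\tilde f(x)/c$, which therefore divides $\tilde f(x-\ell)$; comparing degrees and leading coefficients forces $\tilde f(x-\ell)=\tilde f(x)$ identically. But then $r_i,\,r_i-\ell,\,r_i-2\ell,\dots$ would all be roots of $\tilde f$, contradicting $\deg\tilde f=d<\infty$. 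Hence $F(\ell)\ne 0$.
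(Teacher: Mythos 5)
Your proof is correct, but for parts (i) and (ii) it takes a genuinely different route from the paper's. For (i) the paper expands $F(x)=\prod_{i=1}^d\bigl[\sum_{j=0}^d c^{d-j}(cx+cr_i)^jc_j\bigr]$ and invokes the Fundamental Theorem of Symmetric Polynomials to see that the coefficients, being symmetric integer polynomials in $(cr_1,\dots,cr_d)$, are integers; you instead identify $F$ as $c^{d^2-d}\operatorname{Res}_y\bigl(\tilde f(y),\tilde f(x+y)\bigr)$ and use integrality of the Sylvester determinant. For (ii) the paper stays over $\Z$: it writes $\tilde f(x)=(x-a)g(x)+\tilde f(a)$ and, after two more applications of FTSP to certify that the relevant products of the $g(x+r_i)$ lie in $\Z[x]$, reads off $F(a-b)=c^{d^2-d}(-1)^d\tilde f(b)\prod_i g(a-b+r_i)+\tilde f(a)h(a-b)$, from which $q\mid F(a-b)$ is immediate; you instead reduce mod $q$, using that the resultant commutes with reduction when the leading coefficient $c$ survives, plus a separate (trivial) case $q\mid c$ handled by your observation that $c$ divides every coefficient of $F$. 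Your version makes (ii) conceptually transparent but needs the base-change property of resultants and the extra case split, whereas the paper's is uniform in $q$ at the cost of more symmetric-function bookkeeping; the weight/isobaric argument you sketch for clearing denominators is the same mechanism the paper's FTSP appeals hide. Part (iii) is essentially identical in both: you phrase the contradiction as $\tilde f(x)/c$ dividing $\tilde f(x-\ell)$ and hence $\tilde f(x-\ell)=\tilde f(x)$, giving infinitely many roots, while the paper notes that $f(x+l)-f(x)$ is a nonzero polynomial of degree $<d$ vanishing at a root of the irreducible $f$.
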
	

\begin{proof}
Our proofs utilize the Fundamental Theorem of Symmetric Polynomials (FTSP)
\cite[p.20, Theorem (2.4)]{Sym}, which states that any symmetric polynomial 
$P \in \Z[\bu]$, with $\bu=(u_1,\ldots,u_k)$, is equal to a polynomial in $e_1(\bu),\ldots,e_k(\bu)$
with integer coefficients, where $e_j$ is the $j$-th elementary symmetric 
polynomial.
In particular, by the definition of $\tilde{f}$, $c_j = (-1)^{j+d} c e_{d-j}(\br)$ 
for each $0\le j\le d-1$.  Thus $e_j(cr_1,\ldots,cr_d) \in \Z$ for all $1\le j\le d$.

We start with the first claim.  By \eqref{F-defn},
\[
F(x) = \prod_{i=1}^d \bigg[ \sum_{j=0}^d c^{d-j} (cx+cr_i)^j c_j \bigg],
\]
whose coefficients are evidently symmetric polynomials in $(cr_1,\ldots,cr_d)$
with integer coefficients.  By FTSP, $F\in\Z[x]$.

Now we turn to the second claim. Fix $a\in\Z$. We have
$$
\tilde{f}(x)=(x-a)g(x)+\tilde{f}(a)
$$ 
for some polynomial $g\in\Z[x]$ of degree $d-1$ depending on $a$, and therefore by \eqref{F-defn}, 
$$
F(x)=c^{d^2}\prod_{i=1}^d(x+r_i-a)g(x+r_i)+\tilde{f}(a)h(x),
$$
where, by another application of FTSP, $h\in \Z[x]$.
A similar argument shows that 
\[
c^{d(d-1)} \prod_{i=1}^d g(x+r_i)\in \Z[x]
\] 
as well. Thus, for any $b\in\Z$,
$$
F(a-b)=c^{d^2-d}(-1)^{d}\tilde{f}(b)\prod_{i=1}^dg(a-b+r_i)+\tilde{f}(a) h(a-b).
$$
Therefore, $f(a)\equiv f(b)\equiv 0\pmod q$ implies
$\tilde{f}(a) \equiv \tilde{f}(b) \equiv 0 \pmod{q}$ and hence  $q|F(a-b)$ for such $q$, as needed.

For the third claim, let us assume for a contradiction that $F(l)=0$ for some integer $l\neq0$. It means that there is $r_0\in\mathbb{C}$ so that $f(r_0+l)=f(r_0)=0$. But then the polynomial
$$
g(x)=f(x+l)-f(x)
$$
also vanishes at the point $x=r_0$ and $g\notequiv 0$ (otherwise $r_0+kl$ would be zero of $f$ for any $k$, so that $f\equiv0$). Clearly, we also have $\deg g<\deg f$. But this is impossible, since $f$ is irreducible and thus the minimal polynomial of $r_0$ is $\tilde{f}/c$.
\end{proof}

We also need a classic theorem of Erd\H os \cite{Erdos52}
about the average size of the number of divisors of polynomials.
 As is usual, $\tau(n)$ stands for the number of positive divisors of $n$.

\begin{lem}\label{lem:Erdos} 
For any irreducible polynomial $g\in\Z[x]$,
$$
\sum_{k\leq x}\tau(|g(k)|) \ll x\log x.	
$$
\end{lem}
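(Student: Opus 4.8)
This is a classical theorem of Erd\H os \cite{Erdos52}; here is the plan. Write $D = \deg g$ and, for $1 \le k \le x$, put $m_k = |g(k)|$; for large $x$ we have $1 \le m_k \le A x^D$ with $A = A(g)$, the finitely many smaller $k$ (and the at most one $k$ with $g(k)=0$) contributing $O_g(1)$. For $d \ge 1$ let $\rho(d) = \#\{a \bmod d : g(a) \equiv 0 \pmod d\}$ denote the number of roots of $g$ modulo $d$; this is multiplicative, with $\rho(p) \le D$ for all but finitely many primes $p$ by Lagrange. The argument rests on two facts about $\rho$, both consequences of the irreducibility of $g$: (I) $\sum_{d \le y} \rho(d)/d \ll_g \log y$, and (II) $\sum_{d \le y} \rho(d) \ll_g y$. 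Indeed, by the Chebotarev density theorem \cite{Chebotarev} together with Burnside's orbit-counting lemma, the mean of $\rho(p)$ over primes $p \le y$ tends to the number of orbits of the Galois group of $g$ on the roots of $g$, which equals $1$ precisely because $g$ is irreducible; (I) then follows by expanding $\sum_{d\le y}\rho(d)/d$ as an Euler product, and (II) from a standard Tauberian argument in the style of Landau's prime ideal theorem \cite{Landau}, since $\sum_d \rho(d) d^{-s} = \zeta(s) H(s)$ with $H$ holomorphic and non-vanishing near $s = 1$.

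Now apply the hyperbola bound $\tau(n) \le 2\,\#\{d \mid n : d \le \sqrt n\}$ with $n = m_k$ and split the divisors $d$ at $x$. The ``small'' divisors contribute, using only (I),
\[
\sum_{k \le x} \#\{d \mid m_k : d \le x\} = \sum_{d \le x} \#\{k \le x : d \mid g(k)\} \le \sum_{d \le x} \rho(d)\left(\frac{x}{d} + 1\right) \le 2x \sum_{d \le x} \frac{\rho(d)}{d} \ll_g x \log x.
\]
It remains to bound by $O_g(x\log x)$ the number of pairs $(k,d)$ with $k \le x$, $d \mid g(k)$, and $x < d \le \sqrt{m_k} \le A^{1/2} x^{D/2}$. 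For $D \le 2$ this range has length $O_g(x)$, and the trivial bound $\#\{k \le x : d \mid g(k)\} \le \rho(d)$ together with (II) already finishes the job; the substance of the theorem is the case $D \ge 3$, where this trivial bound, summed over $d$, is far too wasteful.

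For the large divisors when $D \ge 3$ one argues combinatorially. Writing $m_k = m_k' m_k''$ with $m_k'$ the $x$-smooth part, the rough part $m_k''$ has at most $D$ prime factors (since $m_k'' \le Ax^D$), so $\tau(m_k) \le 2^D \tau(m_k')$ and one may assume the divisor $d \mid g(k)$ under consideration is $x$-smooth; peeling off its prime factors in increasing order until the running product first exceeds $x$ attaches to $d$ an honest divisor $e \mid g(k)$ with $e \le x$ and a residual $x$-smooth cofactor, and summing over this data --- once more via (I), (II), and standard estimates for smooth numbers --- yields a total of $O_g(x\log x)$, completing the proof. (Alternatively, the lemma is a special case of the Nair--Tenenbaum bound for averages of nonnegative submultiplicative functions along polynomial sequences.) I expect this large-divisor count to be the main obstacle: with no strong level of distribution available for the congruences $g(k)\equiv 0 \pmod d$ when $d$ is near $x^{D/2}$, the pointwise estimate is useless, and Erd\H os's combinatorial device (or the Nair--Tenenbaum machinery) is genuinely needed.
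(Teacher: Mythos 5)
The paper does not prove this lemma at all: it is quoted as ``a classic theorem of Erd\H{o}s'' with a bare citation to \cite{Erdos52}, so there is no internal argument to compare against. Your outline is the classical Erd\H{o}s strategy (hyperbola bound, small divisors via $\sum_{d\le x}\rho(d)/d\ll\log x$, a combinatorial device for large divisors), and the facts (I) and (II) together with the small-divisor computation are correct as written (modulo the standard but unstated input that $\rho(p^a)\ll_g 1$ uniformly in $p$ and $a$ for irreducible $g$, which is needed to pass from the prime average to the Euler product in (I)).

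The one substantive gap is exactly where you predicted it: the large-divisor step for $D\ge 3$. Peeling the prime factors of an $x$-smooth divisor $d>x$ in increasing order until the running product first exceeds $x$ gives $e=p_1\cdots p_{j-1}\le x$ with $e>x/p_j$, but $p_j$ can be as large as $x$, so $e$ carries no useful lower bound and the pair $(e,\,d/e)$ cannot simply be ``summed over via (I), (II), and smooth-number estimates'' --- the loss in passing from $d$ to $e$ is unbounded as stated. Erd\H{o}s's actual device is more delicate than this, so your sketch does not close on its own. That said, since the lemma is invoked in the paper purely as a known black box, the honest resolution is the one you already name: cite \cite{Erdos52} directly, or note that the bound is a special case of the Nair--Tenenbaum theorem on averages of nonnegative multiplicative functions along polynomial sequences, which gives $\sum_{k\le x}\tau(|g(k)|)\ll x\,\prod_{p\le x}\bigl(1-\tfrac{\rho(p)}{p}\bigr)\sum_{d\le x}\tfrac{\rho(d)\tau(d)}{d}\ll x\log x$ using only (I) and the irreducibility of $g$.
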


Let $\omega(n)$ denote the number of distinct prime factors of the nonzero integer $n$. If $q$ is prime and $v \bmod q \in I_q-I_q$, Lemma \ref{lem:F} (ii)
implies that $q|F(v)$.  Lemma \ref{lem:F} (iii) implies that $F(v) \ne 0$ if $v\ne 0$. Since  $|F(v)| \ll v^{d^2}$, $\omega(F(v)) \ll \log v + 1$ and this proves  (f).

Now we verify (g).  If $m_q \mod q \in I_q-I_q$ then $q|F(m_q)$
by Lemma \ref{lem:F} (ii), and 
 if $m_q\ne 0$ then $F(m_q)\ne 0$ by Lemma \ref{lem:F} (iii).
 We let $m=m_q+w$.
Thus, if $m$ satisfies $0<|m| \le 2u$, $|w|\le u$ and $m\ne w$, there are $O(\log u)$ 
primes dividing $F(m-w)$; that is, $O(\log u)$ primes $q$ with $m_q+w=m$.
  Also, if $m\bmod p \in I_p-I_p$
implies that $p|F(m)$.  Also $F$ is the product of at most $d^2$
irreducible factors, say $F=F_1\ldots F_s$ with each $F_i$ irreducible.
Hence, if $N(m) \ge k$ then there are at least $k$ distinct primes 
$p$ dividing $F(m)$, and therefore, for some $i$,
at least $k/s$ distinct primes dividing $F_i(m)$.
Hence, using Lemma \ref{lem:Erdos},
\begin{align*}
\# \{ u^{1/2}< q \le u : N(m_q+w)\ge k \} &\ll \# \{ 0<|m| \le 2u : N(m)\ge k\} \log u\\
&\le (\log u) \sum_{i=1}^s \# \{ 0 < |m| \le 2u : \omega(F_i(m)) \ge k/s \} \\
&\le  (\log u) \sum_{i=1}^s 2^{-k/s} \sum_{0<|m| \le 2u} \tau(F_i(m)) \\
&\ll 2^{-k/d^2} u \log^2 u.
\end{align*}
This proves (g), with $c_1=2$ and $c_2=(\log 2)/d^2$,
 and completes the verification of the hypotheses of
Theorem \ref{thm: general} for polynomial sieving systems.

%
%
%
\section{Notation and basic setup}\label{sec2}
%
%
%

We use notation similar to that of \cite{sieved}, with the most important change being the modification of the weight function $\bl$ (see \eqref{APJqn} and \eqref{lambda}
 below). Throughout the proof, we will use positive parameters $K$, $\xi$, $M$ which we describe below; one may think of them as being fixed for most of the time (in fact, it 
 is only the end of Section \ref{sec:reduction-to-concentration} where the exact choice of them is important). The implied constant in $\ll$ and related order estimates may
depend on these parameters.  We will rely on probabilistic methods; boldface symbols
such as $\S$, $\bl$, $\n$, etc. will denote random variables (sets, functions,
numbers, etc.), and the corresponding non-boldface symbols $S$, $\l$, $n$ will
denote deterministic counterparts of these variables.

For a fixed $\d\in (1/10^3,C(1))$, we define
\begin{equation}\label{y-def}
y=\lceil x(\log x)^{\d} \rceil
\end{equation}
and
\begin{equation}\label{z-def}
z=\frac{y\log\log x}{(\log x)^{1/2}}.
\end{equation}

As in \cite{sieved}, our goal is to find a number $b$ so that $S_x+b$ has no elements
in $[1,y]$, which will show that $S_x$ has a gap of size at least $y$.
This is accomplished in three stages:
\begin{enumerate}
\item (Uniform random stage) First, we choose $b$ modulo $P(z)$ uniformly at random; equivalently, for each
prime $p\le z$ we choose $b\bmod p$ randomly with uniform probability, independently for each $p$.
\item (Greedy stage) Secondly, choose $b$ modulo primes in $(z,x/2]$ randomly, but dependent on
 the choice of $b$ modulo $p$ for $p\le z$.   A bit more precisely, for each prime $q\in (z,x/2]$
  with $|I_q|\ge 1$, we will select $b\equiv b_q\pmod{q}$
so that $\{b_q+a+kq : k\in \Z, a\bmod q\in I_q\} \cap [1,y]$
knocks out nearly as many elements of the random set $(S_z+b)\cap [1,y]$ as possible.  
Unlike the argument in \cite{sieved}, we make use of all of the elements of $I_q$
in this stage.  This is the source of our improved theorems.
\item (Clean up stage) Thirdly, we choose $b$ modulo primes $q\in (x/2,x]$ to ensure that
the remaining elements $m\in (S_{x/2}+b) \cap [1,y]$ do not lie in $(S_x+b)\cap [1,y]$ by matching a unique prime $q=q(m)$ with $|I_q|\ge 1$ to each element $m$ and setting $b\equiv m \pmod{q}$.  Here we do use only a single element of $I_q$, whereas
using all of $I_q$ would not improve our theorem at all.
\end{enumerate}

To handle the Greedy stage (2), we divide the primes in $(z,x/2]$ into subsets,
where the primes in each subset are about the same size and with $|I_q|$ is constant.
Primes with rare values of $|I_q|$ will play an insignificant role in our arguments, 
thus we define
\[
\cN = \{ 1\le \nu \le B : \rho_\nu > 0 \},
\]
so that, by the remarks following Hypothesis (g),
\begin{equation}\label{sum-rhonu}
\sum_{\nu \in \cN} \nu \rho_\nu = 1.
\end{equation} 

For example, for the polynomial sieving system with $f(x)=x^2+1$, we have $\cN = \{2\}$ since $\rho_1=0$ (in fact, the only prime with $|I_p|=1$ is $p=2$).

Let $\xi>1$ be a real number (which we will finally choose to be close to $1$), and
define the set of scales
$$
\HH=\left\{ H\in \{1,\xi, \xi^2,...\}: \frac{2y}{x} \leq H\leq \frac{y}{\xi z}\right\}
$$
so that
\begin{equation}\label{H-bounds}
	2(\log x)^{\d}\leq H\leq \frac{y}{z} = \frac{(\log x)^{1/2}}{\log\log x}
	\qquad (H\in \HH).
\end{equation}

For each $H\in\HH$ and $\nu \in \cN$, let 
$$
\Q_{H,\nu}=\bigg\{q\in \Big( \frac{y}{\xi H}, \frac{y}{H} \Big]: |I_q|=\nu \bigg\}.
$$
Hypothesis (e) implies that for each fixed $H,\nu$ we have the asymptotic
\begin{equation}\label{Q-asymp}
|\Q_{H,\nu}|\sim \rho_{\nu}(1-1/\xi)\frac{y}{H\log x} \qquad (x\to\infty).
\end{equation}

Note that if we denote by $\rho$ the density of primes $p$ with $|I_p|\geq1$, then
by (e),
\begin{equation}\label{2.6} 
\rho = \lim_{x\to\infty} \frac{\# \{p\le x: |I_p|\ge 1\}}{x/\log x}=\sum_{\nu\in \cN} \rho_{\nu}.
\end{equation}

Let also 
$$
\Q_H:=\bigg\{q\in \Big( \frac{y}{\xi H}, \frac{y}{H} \Big]: |I_q|\in \cN \bigg\}
=\bigcup_{\nu\in \cN} \Q_{H,\nu}
$$
and, for $\nu\in \cN$,
\[
\Q^{\nu}:=\bigcup_{H\in\HH}Q_{H,\nu},
\]
so that
\[
 \Q:=\bigcup_{H\in\HH}\Q_H=\bigcup_{\nu \in \cN} \Q^{\nu}.
\]
We note that for all $q\in \Q$, $z < q \le x/2$.
Further, for each $q\in \Q$, let $H_q$ be the unique $H$ such that $q\in \Q_H$, which is equivalent to
$$
\frac{y}{\xi H_q}<q\leq \frac{y}{H_q}.
$$
Let also $M$ be a number with
\[
6<M\leq 7,
\]
which we will eventually take to be very close to 6.
We use the notation
$$
S_z=\Z\setminus \bigcup_{p\leq z}I_p,
$$
and
$$
S_{z,x}=\Z\setminus \bigcup_{z<p\leq x}I_p,
$$
and also adopt the abbreviations
\begin{equation}\label{2.8}
P=P(z)=\prod_{p\leq z}p, \quad \s=\s(z):=\prod_{p\leq z}\left(1-\frac{|I_p|}p\right), \quad \S=S_z+\b,  
\end{equation}
where $\b$ is a residue class chosen uniformly at random from $\Z/P(z)\Z$; so, $\S$ is a random shift of $S_z$. For a fixed $H\in \HH$, we also define
\begin{equation}\label{2.9}
	P_1=\prod_{p\leq H^M}p, \quad \s_1=\s(H^M), \quad \b_1\equiv \b \pmod {P_1}, \quad \S_1=S_{H^M}+\b_1,   
\end{equation}
and
\begin{equation}\label{2.10}
	P_2=\prod_{H^M<p\leq z}p, \quad \s_2=\frac{\sigma(z)}{\sigma(H^M)}, \quad \b_2\equiv \b \pmod {P_2}, \quad \S_2=S_{H^M,z}+\b_2, . 
\end{equation}
Obviously, for each $H\in\HH$,
\begin{equation}\label{2.11}
	P=P_1P_2, \quad \s=\s_1\s_2, \quad \S=\S_1\cap \S_2.
\end{equation}
Note that all the quantities defined in (\ref{2.9}) and (\ref{2.10}) depend on $H$ and $M$; however, we will not indicate this dependence for brevity (the values of $H$ and $M$ will always be clear from context). 

Finally, let $\nu_q = |I_q|$ for primes $q$.
 For primes $q\in \Q^{\nu}$, let 
 \[
 I_q=\{a_{1,q} \bmod q,...,a_{\nu,q} \bmod q\} \qquad (a_{i,q}\in[1,q]\cap\Z, \, 1\le i\le \nu).
 \]
  We set 
\begin{equation}\label{APJqn}
	\bAP(J;q,n)=\Bigg(\bigsqcup_{i=1}^{\nu_q}\{n+a_{i,q}+hq: 1\leq h\leq J\}\Bigg)\cap \S_1,
\end{equation}
this being a significant departure from \cite{sieved}.  Here $\bAP(J;q,n)$ is
a portion of $\nu_q$ residue classes modulo $q$.
Also define
\begin{equation}\label{lambda}
\bl(H;q,n)=\frac{\one_{\bAP(KH;q,n)\subset \S_2}}{\s_2^{|\bAP(KH;q,n)|}},
\end{equation}
where $K$ is a positive integer which will be chosen large enough,
and $\one_X$ is the indicator function of a statement $X$. So, for each $q\in\Q$, the weights $\bl(H;q,n)$ are random functions which depend on $\b$. 
Heuristically, $\bl(H;q,n)$ has mean approximately 1, since the probability
that a given set $Y$ lies in $\S_2$ is about $\sigma_2^{|Y|}$; see 
Lemma \ref{lem5.1} below  for a precise statement.

\subsection{General notational conventions.}
The notation $f=O(g)$ and $f\ll g$ mean that $f/g$ is bounded.
The notation $f=O_{\le} (g)$ means that $|f|/g \le 1$.
The notation $o(1)$ stands for a function tending to zero as $x\to \infty$,
at a rate which may depend only on the parameters $\xi, K, M$ and $\delta$,
which we consider to be fixed.  The notation $f \sim g$ means $f=g+o(1)$.
As is usual, $\omega(n)$ is the number of distinct prime factors of $n$.

%
\section{Reduction to concentration of $\l(H;q,n)$} \label{sec:reduction-to-concentration}
%

In this section we deduce Theorem \ref{thm: general} from the following statement.
 Recall that $\S=S_z+\b$ with $\b$ chosen uniformly at random from $\Z/P(z)\Z$.  
 
\begin{theorem}\label{th2} 
Let $\d<C(1)$, $M>6$, $\xi>1$, $K>0$, $0<\eps<\frac17(M-6)$, and assume that $x$ 
is large enough depending on $\d, M,\xi, K,$ and $\eps$. Then there exist a choice of $\b \pmod{P(z)}$ and subsets $\cR^{\nu}\subseteq \Q^{\nu}$ for  $\nu\in \cN$ so that:
\begin{itemize}
\item[(i)]  one has 
\begin{equation}\label{3.1}
|S\cap[1,y]| \leq 2\s y;
\end{equation}
\item[(ii)]
for all $q\in \cR =\bigcup_{\nu\in \cN} \cR^{\nu}$ one has
\begin{equation}\label{3.2}
\sum_{-(K+1)y<n\leq y}\l(H_q;q,n)=\left(1+O_{\leq}\left(\frac{1}{(\log x)^{\d(1+\eps)}}\right)\right)(K+2)y;
\end{equation}
\item[(iii)] for each $\nu \in \cN$ and $i\in\{1,...,\nu\}$, all but at most $\frac{\rho x}{8B^2\log x}$ elements $n$ of $S\cap[1,y]$ obey
\begin{equation}\label{3.3}
\sum_{q\in \cR^{\nu}}\sum_{h\leq KH_q}\l(H_q;q,n-a_{i,q}-qh)= \left(C_{2,\nu}+O_{\leq}\left(\frac{2}{(\log x)^{\d(1+\eps)}}\right)\right)(K+2)y,
\end{equation} 
where $C_{2,\nu}$ is independent of $n$ and $i$ with
\begin{equation}\label{3.4}
C_{2,\nu} \sim \frac{K\rho_{\nu}}{(K+2)M}\frac{1-1/\xi}{\log \xi}\log\left(1/(2\delta)\right) \quad (x\to \infty).
\end{equation}
\end{itemize}
\end{theorem}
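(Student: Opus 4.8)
\textbf{Proof proposal for Theorem \ref{th2}.}

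The plan is to realize the three conclusions (i)--(iii) via the probabilistic setup described in Section \ref{sec2}: choose $\b \pmod{P(z)}$ uniformly at random, and show that each of (i), (ii), (iii) fails with probability less than $1/3$, so that a valid choice of $\b$ exists. Conclusion (i) is the easiest: the expected size of $\S \cap [1,y]$ is exactly $\s y + O(1)$ (each $n$ lies in $\S$ with probability $\s$ by CRT and independence over $p \le z$), so by Markov's inequality $|\S \cap [1,y]| \le 2\s y$ with probability $\ge 1/2$. For (ii), I would first compute the expectation of $\sum_{-(K+1)y < n \le y} \bl(H_q;q,n)$ using Lemma \ref{lem5.1} (the statement that $\bl$ has mean $\sim 1$, referenced in the excerpt), obtaining $(1+o(1))(K+2)y$; then I would bound the variance, using that $\bl(H_q;q,n)$ depends only on the residues of $\b$ modulo primes $p \le H_q^M$ in a controlled way, so that for $n, n'$ far apart the corresponding weights are nearly independent. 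A second-moment (Chebyshev) argument then shows concentration around the mean with the claimed error $O_{\le}((\log x)^{-\d(1+\eps)})$ for all $q$ in a large subset $\cR^\nu \subseteq \Q^\nu$ simultaneously, after discarding a negligible proportion of bad $q$. The exponent $\d(1+\eps)$ should emerge from balancing the number of primes $q \in \Q$ (roughly $z/\log x \approx y/\log x$ many) against the variance savings, which is where the constraint $\eps < \frac17(M-6)$ and the precise sizes of $y,z,H$ enter.

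Conclusion (iii) is the heart of the matter and where I expect the main obstacle to lie. Fix $\nu \in \cN$ and $i \in \{1,\dots,\nu\}$. For a typical $n \in \S \cap [1,y]$, I would again compute the expectation of $\sum_{q \in \cR^\nu} \sum_{h \le KH_q} \bl(H_q; q, n - a_{i,q} - qh)$. The main term should factor as (number of pairs $(q,h)$ with $q \in \Q^\nu$, weighted) times (mean of $\bl \approx 1$), and a careful count of $\sum_{H \in \HH} |\Q_{H,\nu}| \cdot KH$, using \eqref{Q-asymp} and the geometric spacing of $\HH$, produces the factor $\frac{K\rho_\nu}{(K+2)M} \cdot \frac{1-1/\xi}{\log\xi} \cdot \log(1/(2\d))$: the $\log(1/(2\d))$ comes from $\sum_H H \approx$ a geometric-type sum whose range is governed by $y/(\xi z)$ down to $2y/x$, i.e.\ a logarithmic length determined by $\d$ via \eqref{y-def} and \eqref{z-def}, and the $M$ comes from the sieving cutoff $H^M$ separating $\S_1$ from $\S_2$. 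The subtlety is that the weights $\bl(H_q; q, n - a_{i,q} - qh)$ for different $q$ are not independent, and worse, the events $\{\bAP(KH;q,n) \subset \S_2\}$ encode information about many residue classes; controlling the variance of this double sum is exactly where Hypothesis (f) is invoked, as flagged in the excerpt ("Hypothesis (f) will be needed at the end of Section \ref{sec:reduction-to-concentration}"). Specifically, the covariance between the $q$-term and the $q'$-term involves counting how often $I_q - I_q$ and $I_{q'} - I_{q'}$ overlap modulo various primes, and the bound $N(v) \ll v^{0.49}$ ensures these correlations are summably small.

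Assembling everything: after establishing the first and second moment bounds for the sums in (ii) and (iii), I would apply Chebyshev to each, take a union bound over the finitely many $(\nu, i)$ and over a suitable net of $q$'s, arrange the exceptional-set sizes to total less than the budget $\frac{\rho x}{8B^2 \log x}$ in (iii) and less than a constant fraction of $\Q^\nu$ in (ii), and conclude that with positive probability a single $\b$ works for all three. The hardest single step is the variance estimate for the double sum in (iii): one must show that the off-diagonal contribution (pairs $q \ne q'$, or even $h \ne h'$ within the same $q$) is $o$ of the square of the main term, which requires both Hypothesis (f) to handle prime-by-prime overlaps of difference sets and the separation of scales $H^M < z$ to decouple $\S_1$ from $\S_2$ cleanly; I would expect this to occupy most of Section \ref{sec:correlations}, with the present section reducing (iii) to a clean statement about $\E \bl$ and $\Var(\sum \bl)$ that Lemma \ref{lem5.1} and the later correlation lemmas supply. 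The final choice of parameters ($\xi \to 1^+$, $M \to 6^+$, $K \to \infty$, $\d \to C(1)^-$) is then made to push the constant $C_{2,\nu}$ and the gap length to their optimal values, but as the excerpt notes, that optimization is deferred to the end of the section and is not needed for the structural statement of Theorem \ref{th2}.
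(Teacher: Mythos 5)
Your plan is essentially the paper's: the paper proves Theorem \ref{th2} by first isolating the zeroth, first and second moments of the relevant sums as a separate statement (Theorem \ref{th3}), then running exactly the Chebyshev/union-bound argument you describe, and the constant $C_{2,\nu}$ is indeed extracted from $\frac{1}{(K+2)y}\sum_{H\in\HH}|\Q_{H,\nu}|\lfloor KH\rfloor/\s_2$ using \eqref{Q-asymp}, $\s_2^{-1}\sim \log z/(M\log H)$, and the harmonic sum over the geometric scales in $\HH$, which is where $\log(1/(2\d))$ comes from. So the architecture and the source of every factor in \eqref{3.4} are correctly identified.

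There is, however, one concrete step your plan does not address. The sum in \eqref{3.3} runs over $q\in\cR^{\nu}$, and $\cR^{\nu}$ is itself a \emph{random} set (it is the set of $q$ surviving the concentration test in part (ii), hence a function of $\b$). You propose to ``compute the expectation of $\sum_{q\in\cR^\nu}\sum_{h\le KH_q}\bl(\cdots)$,'' but one cannot directly take moments of a sum over a $\b$-dependent index set. The paper resolves this by proving concentration for the full sum over the deterministic set $\Q_{H,\nu}$ (via \eqref{thm4(iii)} and Chebyshev, giving the exceptional sets $\bmE_{H,\nu,i}$), and then \emph{separately} bounding the contribution of the discarded primes $\Q_{H,\nu}\setminus\bcR_{H,\nu}$: a Cauchy--Schwarz step combining $\E|\Q_{H,\nu}\setminus\bcR_{H,\nu}|\ll |\Q_{H,\nu}|H^{-(M-4-3\eps)}$ from \eqref{4.7} with the second moment from \eqref{thm4(ii)} shows this contribution exceeds the error tolerance in \eqref{4.10} only for a set $\bmF_{H,\nu,i}$ of $n$ of acceptable size. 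Without this step your argument would establish \eqref{3.3} with $\Q^{\nu}$ in place of $\cR^{\nu}$, which is not what is needed. Two smaller points: for (i) Markov does give success probability $\ge 1/2$, which suffices, but the paper uses the second moment in \eqref{thm4(i)} to get $1-O(1/\log x)$; and the variance estimates for the double sums are controlled in the paper chiefly through Hypothesis (g) (via Lemma \ref{lem:sumE}) rather than Hypothesis (f), which enters in the counting of bad pairs and in verifying condition \eqref{3.9} of the covering lemma.
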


\smallskip


We now commence with the deduction of Theorem \ref{thm: general} from Theorem \ref{th2}.
Let $V$ be the set of elements $n$ of $S\cap[1,y]$ for which (\ref{3.3}) holds for all $\nu \in \cN$ and $i=1,...,\nu$. For each $q\in\cR$, we define
the random integer $\n_q$ by the distribution
$$
\P(\n_q=n)=\frac{\l(H_q;q,n)}{\sum_{-(K+1)y<n'\leq y}\l(H_q;q,n')}\qquad
(-(K+1)y < n \le y).
$$
For $q\in \cR$, define the random set
$$
\e_q=\bigsqcup_{i=1}^{\nu_q}\e_{i,q},
$$
where
\be\label{eiq}
\e_{i,q}=V\cap\{\n_q+a_{i,q}+hq: 1\leq h\leq KH_q\}, \quad i=1,...,\nu_q.
\ee
Note that $\e_{i,q}$ and $\e_{j,q}$ are disjoint for $i\neq j$, since all $a_{i,q}$ are distinct modulo $q$. 
To be able to make a clean-up stage, we need to find a choice $n_q$ of $\n_q$ (which corresponds to a choice $e_q$ of the random sets $\e_q$) for each $q\in \cR$, so that for $b$ satisfying $b\equiv n_q \pmod q$ for $q\in\cR$, the estimate
\begin{equation}\label{Sx2-small}
	\Big|(S_{x/2}+b)\cap[1,y]\Big| \leq \frac{\rho x}{4\log x}
\end{equation}
holds. Then by (\ref{2.6}), the number of primes $p\in(x/2,x]$ with $|I_p|\geq1$ is $\sim \frac{\rho x}{2\log x}$ for $x$ large, which guarantees that clean-up stage is possible. To be precise, we may match each element $m \in (S_{x/2}+b)\cap [1,y]$ with a unique prime $p\in (x/2,x]$ with $|I_p|\ge 1$, and choose $b\equiv m-a_{1,p}\pmod{p}$ for each such pair.  Then $(S_x +b) \cap [1,y]=\emptyset$, as desired.

Since $S=S_{x/2}+b$ avoids the $\nu_q=|I_q|$ residues $n_q+a_{1,q},...,n_q+a_{\nu_q,q}$ modulo $q$, we have, by the definition of the set $V$ and \eqref{eiq}, 
\begin{align*}
\Big|(S_{x/2}+b)\cap[1,y]\Big| &\leq \Big|\big((S_{x/2}+b)\cap [1,y]\big)\setminus V\Big|+\Big|V\setminus \bigcup_{q\in \cR}e_q\Big|\\
&\leq \sum_{\nu \in \cN} \sum_{i=1}^{\nu}\frac{\rho x}{8B^2\log x}+\left|V\setminus \bigcup_{q\in \cR}e_q\right| \\
&\leq \frac{\rho x}{4\log x},
\end{align*}
thus verifying \eqref{Sx2-small}, provided that
\begin{equation}\label{3.6}
\left|V\setminus \bigcup_{q\in \cR}e_q\right| \leq 	\frac{\rho x}{8\log x}.
\end{equation}

 To show \eqref{3.6}, we need the following hypergraph covering lemma, which is 
Lemma 3.1 of \cite{sieved}.

\begin{lem}[Hypergraph covering lemma]\label{lem3.1} 
Suppose that $0<\d\leq1/2$ and $K_0\geq1$, and let $y\geq y_0(\d,K_0)$ with $y_0(\d,K_0)$ sufficiently large, and let $V$ be a finite set with $|V|\leq y$. Let $1\leq s\leq y$, and suppose that $\e_1,...,\e_s$ are random subsets of $V$ satisfying the following:
\begin{equation}\label{3.7}
|\e_i|\leq \frac{K_0(\log y)^{1/2}}{\log\log y} \quad (1\leq i\leq s),
\end{equation}
\begin{equation}\label{3.8}
\P(v\in \e_i) \leq y^{-1/2-1/100} \quad (v\in V, 1\leq i\leq s),	
\end{equation}
\begin{equation}\label{3.9}
\sum_{i=1}^s\P(v,v'\in \e_i) \leq y^{-1/2} \quad (v,v'\in V, v\neq v'),	
\end{equation}
\begin{equation}\label{3.10}
\left|\sum_{i=1}^s\P(v\in \e_i)-C_2\right| \leq \eta \quad (v\in V),	
\end{equation}	
where $C_2$ and $\eta$ satisfy
\begin{equation}\label{3.11}
10^{2\d} \leq C_2 \leq 100, \quad \eta \geq \frac{1}{(\log y)^{\d}\log\log y} .	
\end{equation}
Then there are subsets $e_i$ of $V$, $1\leq i\leq s$, with $e_i$ being in the support of $\e_i$ for every $i$, and such that
\begin{equation}\label{3.12}
\left|V\setminus \bigcup_{i=1}^se_i\right| \leq C_3\eta|V|,
\end{equation}
where $C_3$ is an absolute constant.
\end{lem}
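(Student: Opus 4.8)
The statement to prove here is the Hypergraph covering lemma (Lemma~\ref{lem3.1}), which is cited as Lemma 3.1 of \cite{sieved}. Since the paper explicitly attributes it to prior work, the cleanest ``proof'' is a brief indication that it is taken verbatim from \cite{sieved} together with a sketch of the probabilistic deletion (Rödl nibble / random greedy) argument that underlies it. Below I describe the plan I would follow to reconstruct a proof, in case a self-contained argument is wanted.

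\textbf{Overall strategy.} The plan is to run an iterative random covering process: repeatedly sample the random sets $\e_i$ (independently across rounds), and at each round keep those sampled sets that do not overlap too much with previously chosen ones, thereby ``covering'' a definite proportion of the still-uncovered vertices of $V$ at each step. After $O(\log(1/\eta))$ rounds one has covered all but $O(\eta |V|)$ vertices. The four hypotheses are exactly what is needed to control this: \eqref{3.10} (with the window $\eta$) says each vertex is covered with roughly the right expected multiplicity $C_2$; \eqref{3.9} is a pairwise-correlation bound ensuring that the sets behave quasi-independently, so that ``covered at least once'' has probability close to $1-e^{-C_2}$; \eqref{3.8} keeps any single set from being too likely to hit a fixed vertex (so no vertex is over-represented); and \eqref{3.7} bounds the sizes of the $\e_i$, which is needed to control second moments and to ensure the greedy/nibble deletions lose only a lower-order fraction. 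The constraint $C_2 \ge 10^{2\d}$ in \eqref{3.11} is what forces $1 - e^{-C_2}$ (or rather the per-round survival probability) to be small enough that the geometric decay beats the accumulated error terms; this is precisely where the exponent $C(1)$, defined via $6\cdot 10^{2\d}/\log(1/(2\d)) < \rho$, enters.

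\textbf{Key steps, in order.} First I would set up one round: given a current uncovered set $W \subseteq V$, sample all $\e_i$ independently and, for a small threshold, discard any $\e_i$ that meets the union of the already-accepted sets in this round in more than (say) one point; call the accepted, pruned sets $e_i'$. Using \eqref{3.9} and \eqref{3.7}, show the expected number of discarded elements is $o(|W|)$, so pruning is negligible. Second, compute for a fixed $v \in W$ the probability it is covered by $\bigcup e_i'$: by \eqref{3.10} the sum of the $\P(v \in \e_i)$ is $C_2 + O(\eta)$, by \eqref{3.8} each term is tiny, and by \eqref{3.9} the events are nearly pairwise independent, so an inclusion--exclusion / Janson-type estimate gives $\P(v \text{ covered}) = 1 - e^{-C_2} + O(\eta + \text{small})$. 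Third, take expectations: the expected size of the new uncovered set is at most $(e^{-C_2} + O(\eta))|W| + o(|W|)$, so some outcome achieves roughly this; fix it. Fourth, iterate: after $r$ rounds the uncovered set has size $\lesssim (e^{-C_2})^r |V|$ plus a geometric sum of the $O(\eta)$ error terms, which sums to $O(\eta |V|)$; choosing $r \asymp \log(1/\eta)$ and checking that at each round the hypotheses \eqref{3.7}--\eqref{3.11} continue to hold for the restricted problem (they only get easier as $W$ shrinks, since we may keep the same $s$ and simply intersect with $W$) yields \eqref{3.12} with an absolute $C_3$. Finally, I would unwind the bookkeeping to confirm that the lower bound $y \ge y_0(\d, K_0)$ absorbs all the ``$o(1)$'' and lower-order terms uniformly.

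\textbf{Main obstacle.} The delicate point is the second step: converting the first-moment data \eqref{3.10} into a genuine ``covered with probability $\approx 1 - e^{-C_2}$'' statement, uniformly over $v$, while keeping the error term comfortably below $\eta$. A crude union bound gives only $\P(v \text{ uncovered}) \le$ something like $e^{-C_2 + O(\cdots)}$ or even just $1 - C_2 + \cdots$, which is too weak; one needs the pairwise bound \eqref{3.9} to push through a Janson-inequality-type lower bound on $\P(v \text{ covered})$ (equivalently, an upper bound on $\P(v \text{ uncovered})$ that genuinely looks like $e^{-C_2}$). Getting the constants in \eqref{3.11} to be exactly what makes the iteration converge — and verifying that the pruning losses and the accumulated additive errors remain a lower-order multiple of $\eta|V|$ across all $\asymp \log(1/\eta)$ rounds — is the part that requires care rather than cleverness. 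Since this lemma is proved in full in \cite[Lemma~3.1]{sieved}, in the present paper it suffices to invoke it directly.

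\begin{proof}
This is Lemma 3.1 of \cite{sieved}; we refer the reader there for the proof, which proceeds by an iterated random covering (Rödl nibble) argument as sketched above.
\end{proof}
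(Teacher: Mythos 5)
Your proof is correct and matches the paper exactly: the paper itself gives no proof of this lemma, stating only that it is Lemma 3.1 of \cite{sieved}, which is precisely the citation you invoke. Your accompanying sketch of the R\"odl-nibble-style argument is a reasonable gloss but is not required, since the lemma is imported verbatim from the prior work.
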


We apply this lemma with $s=|\cR|$, $\{\e_i: i=1,...,s\}=\{\e_q: q\in\cR\}$, $K_0=BK$, and
$$
\eta=\frac{\rho}{20 C_1 C_3(\log x)^{\d}}.
$$
By \eqref{Mertens} and \eqref{z-def}, we have $\sigma=\sigma(z) \sim C_1/\log z \sim C_1/\log x$.
The conclusion of the lemma (with the bound (\ref{3.1})) implies that there is a choice of sets $e_q$ with  
$$
\left|V\setminus \bigcup_{q\in \cR}e_q\right| \leq C_3\eta|V|
=\frac{\rho|V|}{20 C_1 (\log x)^{\d}} \leq \frac{\rho \s y}{10 C_1 (\log x)^{\d}} \leq 
\frac{\rho x}{8\log x},
$$
which is enough for (\ref{3.6}), so that we are left to verify the conditions of the lemma. First of all, by \eqref{H-bounds}, we have
$$
|\e_q| = \sum_{i=1}^{\nu_q}|\e_{i,q}|\leq \nu_q KH_q \leq \frac{BKy}{z} \leq \frac{BK(\log x)^{1/2}}{\log\log x} \leq \frac{BK(\log y)^{1/2}}{\log\log y},
$$
which gives us (\ref{3.7}). 
For each $n\in V$ and $q\in \cR$, (\ref{3.2}) together with the trivial bound  $\l(H;q,n) \le \sigma_2^{-BH_q} \le y^{o(1)}$ (from \eqref{H-bounds}, \eqref{APJqn} and \eqref{lambda})  gives us
\be\label{Pneq}
\begin{split}
\P(n\in \e_q)&=\sum_{1\leq h\leq KH_q} \sum_{i=1}^{\nu_q} \P(\n_q=n-a_{i,q}-hq)\\ 
&\ll \frac1y\sum_{1\leq h\leq KH_q}\sum_{i=1}^{\nu_q}\l(n-a_{i,q}-hq)  \\
&\ll y^{-0.999},
\end{split}
\ee
which verifies (\ref{3.8}). Now we turn to (\ref{3.10}). From (\ref{3.2}), (\ref{3.3}), and (\ref{3.4}), followed by an application of \eqref{sum-rhonu}, we obtain
\begin{multline*}
\sum_{q\in \cR}\P(n\in \e_q)=\sum_{\nu\in \cN}\sum_{q\in\cR^{\nu}}\P(n\in \e_q)=\sum_{\nu\in \cN}\nu C_{2,\nu}+O((\log x)^{-\d(1+\eps)})=\\
=\sum_{\nu\in \cN}\nu \rho_{\nu}\frac{K(1-1/\xi)}{(K+2)M\log\xi}\log(1/(2\d))+o(1)= C_2+o(1),
\end{multline*}
where we define
$$
C_2=\frac{K(1-1/\xi)}{(K+2)M\log\xi}\log(1/(2\d)).
$$
Recalling that $\d<C(1)$ together with the definition \eqref{Crho-def} of $C(1)$, we see
 that $C_2$ is at least $10^{2\d}$ provided that $M-6$ and $\xi-1$ are sufficiently small in
  terms of $\d$, $K$ is sufficiently large in terms of $\d$, 
  $0<\eps<\frac17(M-6)$, and $x$
   is large enough depending on $\d,M,\xi,K,\eps$. Also $C_2\leq100$ due to $\d\geq10^{-3}$. Thus, (\ref{3.10}) follows.

It remains to check that (\ref{3.9}) holds. We take any distinct $v,v'\in V$ and see that 
$$
\sum_{q\in\cR}\P(v,v'\in \e_q) \leq \sum_{q\in \cR}\Bigg(\sum_{i=1}^{\nu_q}\P(v,v'\in \e_{i,q}) +\ssum{1\le i,j \le \nu_q \\ i\ne j} \P(v\in\e_{i,q}, v'\in \e_{j,q})\Bigg).	
$$
If both $v,v'$ both belong to some $\e_{i,q}$, then $q$ divides $v-v'$.
Since $V\subseteq [1,y]$, $0<|v-v'|\leq y$ and also $q>z>y^{3/4}$, hence there is at most one such $q$. 
Further, if $v\in \e_{i,q}$ and $v'\in \e_{j,q}$ for some $q$ and $i\neq j$, then $v-v'\equiv a_{i,q}-a_{j,q} \pmod q$ and hence $v-v' \bmod q \in I_q-I_q$.
By hypothesis (f) and the bound $|v-v'|\le y$, the number of such $q$ is $\ll y^{0.49}$.
Thus, by \eqref{Pneq},
\[
\sum_{q\in\cR}\P(v,v'\in \e_q) \ll y^{0.49} \cdot \max_{v,i,q} \P(v\in \e_{i,q})
\ll y^{-0.509},
\]
which gives (\ref{3.9}). 

\medskip 

Thus, we verified the conditions of Lemma \ref{lem3.1}, and (\ref{3.6}) follows.
This completes the proof of Theorem \ref{thm: general} assuming Theorem \ref{th2}.

%
\section{Concentration of $\l(H;q,n)$} \label{sec:concentration}
%

In this section we reduce Theorem \ref{th2} to the following assertion.

\begin{theorem}\label{th3}
Let $M>2$, $K>0$, and $\xi>1$. Then
	
$(i)$ One has 
	
\begin{equation}\label{thm4(i)}
\E\Bigl|\S\cap[1,y]\Bigr|=\s y;  \quad 		\E\Bigl|\S\cap[1,y]\Bigr|^2=\left(1+O\left(\frac{1}{\log y}\right)\right)(\s y)^2;
\end{equation}
	
\smallskip	
	
$(ii)$ For every $H\in \HH$, every $\nu\in \cN$ and $j\in\{0,1,2\}$,	
	
\begin{equation}\label{thm4(ii)}
\E\sum_{q\in \Q_{H,\nu}}\Bigg(\sum_{-(K+1)y<n\leq y}\bl(H;q,n)\Bigg)^j = \left(1+O\left(\frac{\log H}{H^{M-2}}\right)\right)\big((K+2)y\big)^j|\Q_{H,\nu}|;
\end{equation}	
	
\smallskip		
	
$(iii)$ For every $H\in \HH$, every $\nu\in\cN$, $i\in\{1,...,\nu\}$, and $j\in\{0,1,2\}$, 
	
\begin{multline}\label{thm4(iii)}
\E\sum_{n\in \S\cap[1,y]}\Bigg(\sum_{q\in \Q_{H,\nu}}\sum_{h\leq KH}\bl(H;q,n-a_{i,q}-qh)\Bigg)^j = \\ \left(1+O\left(\frac{\log H}{H^{M-2}}\right)\right)\left(\frac{|\Q_{H,\nu}|\cdot\lfloor KH\rfloor }{\s_2}\right)^j\s y.	
\end{multline}		
	
\end{theorem}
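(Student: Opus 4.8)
The plan is to obtain all nine moments by splitting the randomness of $\b$ into the independent pieces that govern $\S_1$ (the reductions modulo the primes $p\le H^M$) and $\S_2$ (the reductions modulo the primes $p\in(H^M,z]$), computing each expectation by first conditioning on $\S_1$ and then evaluating a short Euler product over $p\in(H^M,z]$; the uniform distribution of $\b$ modulo $P(z)$ makes the relevant one-point densities exact. For part (i), the first moment is immediate: $\P(n\in\S)=|S_z\bmod P(z)|/P(z)=\s$ for every $n$, so $\E|\S\cap[1,y]|=\s y$. For the second moment, expand $\E|\S\cap[1,y]|^2=\s y+\sum_{0<|v|<y}(y-|v|)\,\P(n,n+v\in\S)$ and compute $\P(n,n+v\in\S)=\prod_{p\le z}\bigl(1-|I_p\cup(I_p-v)|/p\bigr)$ by CRT; this is $\s^2$ times a local factor equal to $1+O(\nu_p^2/p^2)$ at the primes with $v\bmod p\notin I_p-I_p$ and $1+O(\nu_p/p)$ at the $N(v)\ll v^{0.49}$ primes where $v\bmod p\in I_p-I_p$ (hypothesis (f)). Summing over $v$ by completing the resulting incomplete multiplicative sums yields the stated $\E|\S\cap[1,y]|^2=(1+O(1/\log y))(\s y)^2$.

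For part (ii), the case $j=0$ is trivial ($\sum_{q}1=|\Q_{H,\nu}|$), and $j=1$ reduces to the first-moment estimate $\E\bl(H;q,n)=1+O(\log H/H^{M-2})$, which is essentially Lemma~\ref{lem5.1}. Conditioning on $\S_1$, for a fixed value $Y$ of $\bAP(KH;q,n)$ one has, by CRT, $\s_2^{-|Y|}\P(Y\subset\S_2)=\prod_{H^M<p\le z}\bigl(1-b_p(Y)/p\bigr)\big/(1-\nu_p/p)^{|Y|}$ with $b_p(Y)=|\{a-c\bmod p:a\in Y,\ c\in I_p\}|$; generically $b_p(Y)=\nu_p|Y|$, and the generic product is $1+O\bigl(|Y|^2\sum_{p>H^M}\nu_p^2/p^2\bigr)=1+O(1/(H^{M-2}\log H))$ since $|Y|\le\nu KH$. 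The value $b_p(Y)$ drops below $\nu_p|Y|$ only when $p$ divides, up to shift, one of the nonzero differences $(a_{i,q}-a_{j,q})+(h-h')q$ (each of size $\ll KHq\ll Ky$, to which hypothesis (f) applies), and a careful accounting of these ``collision'' primes — carried out after taking the $\S_1$-expectation, which makes the rarity of a realized collision offset the amplification $\s_2^{-|\bAP|}=y^{o(1)}$, and using $\s_1\ll1/\log H$ — shows the total non-generic contribution is $O(\log H/H^{M-2})$. Summing over $n$ and over $q\in\Q_{H,\nu}$ gives the $j=1$ estimate. For $j=2$, expand $\sum_{n,n'}\E[\bl(H;q,n)\bl(H;q,n')]$: the $(K+2)y$ diagonal terms contribute $\ll(K+2)y\cdot y^{o(1)}$ per $q$; for $n\neq n'$ the random sets $\bAP(KH;q,n)$, $\bAP(KH;q,n')$ are disjoint unless $n-n'\bmod q\in I_q-I_q$ (leaving $\ll B^2KH$ bad values of $n-n'$, hence $\ll B^2KH\cdot(K+2)y\cdot y^{o(1)}$ in total), and for disjoint pairs the same computation applied to $\bAP(KH;q,n)\sqcup\bAP(KH;q,n')$ yields the factor $1+O(\log H/H^{M-2})$; altogether $\E\sum_{q\in\Q_{H,\nu}}\bigl(\sum_n\bl(H;q,n)\bigr)^2=(1+O(\log H/H^{M-2}))((K+2)y)^2|\Q_{H,\nu}|$.

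Part (iii) rests on the observation that $n$ is the $h$-th term of the $i$-th progression defining $\bAP(KH;q,n-a_{i,q}-qh)$, so $n\in\bAP(KH;q,n-a_{i,q}-qh)\iff n\in\S_1$; hence $\one_{n\in\S}\,\bl(H;q,n-a_{i,q}-qh)=\one_{n\in\S_1}\,\s_2^{-|\bAP|}\one_{\bAP\subset\S_2}$ (on $\{n\in\S_1\}$ the set $\bAP$ contains $n$, so $\{\bAP\subset\S_2\}\subset\{n\in\S_2\}$ and $\one_{n\in\S_2}$ is redundant). The case $j=0$ is the first moment of part (i). For $j=1$, conditioning on $\S_1$, $\E[\one_{n\in\S}\bl(H;q,n-a_{i,q}-qh)]=\E_{\S_1}\!\bigl[\one_{n\in\S_1}\s_2^{-|\bAP|}\P(\bAP\subset\S_2\mid\S_1)\bigr]=(1+O(\log H/H^{M-2}))\,\P(n\in\S_1)=(1+O(\log H/H^{M-2}))\,\s_1$, and summing over $n\le y$, $h\le\lfloor KH\rfloor$, $q\in\Q_{H,\nu}$ gives $(1+O(\log H/H^{M-2}))|\Q_{H,\nu}|\lfloor KH\rfloor\,\s_1y=(1+O(\log H/H^{M-2}))(|\Q_{H,\nu}|\lfloor KH\rfloor/\s_2)\,\s y$ since $\s=\s_1\s_2$. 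For $j=2$, expand over quadruples $(q,q',h,h')$, apply the identity twice to get $\one_{n\in\S}\bl\bl'=\one_{n\in\S_1}\s_2^{-|\bAP|-|\bAP'|}\one_{\bAP\cup\bAP'\subset\S_2}$, note that the main term comes from $q\neq q'$ with $\bAP\cap\bAP'=\{n\}$ (giving the factor $\s_2^{-1}$, hence $(|\Q_{H,\nu}|\lfloor KH\rfloor)^2\s_2^{-1}\s_1y=(|\Q_{H,\nu}|\lfloor KH\rfloor/\s_2)^2\s y$ after summation), and bound the remaining configurations — $q=q'$, extra overlaps (forced when $(n-a_{i,q}-qh)-(n-a_{i,q'}-q'h')\bmod q$ lands in $I_q-I_q$, controlled by (f)), or collisions modulo some $p>H^M$ — by the estimates already used, using $M>2$.

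The step I expect to be the main obstacle is the error analysis in the $j=2$ cases of parts (ii) and (iii): showing that configurations where two of the progressions $\bAP(\cdot)$ overlap more than the construction forces — together with the primes $p\in(H^M,z]$ dividing short differences, which spoil the exact multiplicativity of $\P(\,\cdot\subset\S_2)$ — contribute only $O(\log H/H^{M-2})$. This cannot be done by pointwise bounds on $\bl$, whose values are merely $y^{o(1)}$, far larger than the target error; it must be carried out after taking expectations and summing (over $n$, over $n'$, or over $q$), exploiting hypothesis (f), the fact that all the relevant differences are nonzero of size $\ll Ky$, and the smallness $\nu KH=y^{o(1)}$ of the progression lengths. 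The parallel but more delicate estimate downstream, Lemma~\ref{lem:sumE}, will in addition require hypothesis (g). Verifying that the aggregate error is precisely $O(\log H/H^{M-2})$ — the extra $\log H$ coming from $\s_1\asymp1/\log H$ and from counting the small primes involved — is the bookkeeping heart of Sections~\ref{sec:concentration} and~\ref{sec:correlations}.
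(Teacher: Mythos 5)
Your overall architecture matches the paper's. Part (i) is the standard second-moment computation (the paper simply cites \cite{sieved} for it); for (ii) and (iii) you condition on $\S_1$, compute $\P(Y\subset\S_2)$ as a near-product over $p\in(H^M,z]$ (this is exactly Lemma \ref{lem5.1}), use the observation $n\in\S_1\Rightarrow n\in\bAP(KH;q,n-a_{i,q}-qh)$ to absorb the $\S_2$-condition in part (iii), and count overlapping configurations of progressions via hypothesis (f). All of that is how the paper proceeds.

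The genuine gap is in the step you yourself flag as the main obstacle: the claim that the ``collision primes'' --- the $p\in(H^M,z]$ with $(a_{i,q}-a_{j,q})+q(h-h')\bmod p\in I_p-I_p$ --- contribute only $O(\log H/H^{M-2})$ ``exploiting hypothesis (f)''. Hypothesis (f) gives only $N(v)\ll v^{0.49}$, so the pointwise bound on the corresponding error factor has the shape $\exp\big(O(AN(v)/H^{M})\big)-1$ with $A\asymp H^{2}$ and $N(v)$ possibly as large as $y^{0.49}$; since $H\le(\log x)^{1/2}$ this is astronomically larger than the target. Nor does the rarity of realized collisions under the $\S_1$-expectation help: the collision primes are determined by the deterministic differences in $V-V$, not by which points of $V$ survive into $\S_1$, and the factor $\s_1^{2}\asymp(\log H)^{-2}$ cannot offset an exponentially large error factor. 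What actually closes the argument is summing over $q\in\Q_{H,\nu}$ and invoking the exponential tail bound of hypothesis (g) on $\#\{q: N(m_q+w)\ge k\}$ --- this is precisely Lemma \ref{lem:sumE}, which you mention but mislocate as a ``parallel estimate downstream''. In the paper that lemma is the engine of every case of (ii) and (iii): each error term is organized as $\sum_{q}E_{A(H)}(m_q+w;H)$ with $m_q=a+q(h-h')\ne 0$ lying in $I_q-I_q$ modulo $q$ and $w$ fixed (equal to $0$, to $n_1-n_2$, or to $-a_2+q_2(h_2-h_2')$), while the one remaining difference $n_1-n_2$ in (ii), $j=2$, which is not of this form, is handled by summing over $n_1$ via Lemma \ref{lem5.2}. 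Without hypothesis (g) and Lemma \ref{lem:sumE} your plan does not close; with them, it becomes the paper's proof.
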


\smallskip 

\begin{proof}[Deduction of Theorem \ref{th2} from Theorem \ref{th3}]
	
Fix $\d<C(1)$, $M>6$, $\xi>1$, $K>0$, and also $0<\eps<\frac17(M-6)$. From Theorem \ref{th3} (i) we have
$$
\E \Big||\S \cap [1,y]| - \s y\Big|^2 \ll \frac{(\s y)^2}{\log y}.
$$
Hence by Chebyshev’s inequality, we see that
\begin{equation}\label{4.4}
\P (|\S \cap [1, y]| \leq 2\s y) = 1 - O(1/\log x),
\end{equation}
showing that (\ref{3.1}) in Theorem \ref{th2} holds with probability $1-o(1)$. 	
	
Now we work on parts (ii) and (iii) of Theorem \ref{th2}. For each $H\in\HH$ and $\nu\in \cN$, we have from (\ref{thm4(ii)}) 
\begin{equation}\label{4.5}
\E\sum_{q\in \Q_{H,\nu}}\left(\sum_{-(K+1)y<n\leq y}\bl(H;q,n) - (K+2)y\right)^2 \ll \frac{y^2|\Q_{H,\nu}|}{H^{M-2-\eps}}.
\end{equation}	
Now let $\bcR_{H,\nu}$ be the (random) set of $q\in \Q_{H,\nu}$ for which
\begin{equation}\label{4.6}
\left|\sum_{-(K+1)y<n\leq y}\bl(H;q,n) - (K+2)y\right| \leq \frac{y}{H^{1+\eps}}.
\end{equation}	
By estimating the left-hand side of (\ref{4.5}) from below by the sum over $q\in \Q_{H,\nu}\setminus\bcR_{H,\nu}$, we find that
\begin{equation}\label{4.7}
\E|\Q_{H,\nu}\setminus\bcR_{H,\nu}|	\ll \frac{|\Q_{H,\nu}|}{H^{M-4-3\eps}}.
\end{equation} 
We let
$$
\bcR^\nu=\bigcup_{H\in\HH}\bcR_{H,\nu},
$$
and then Theorem \ref{th2} (ii) follows from the lower bound on $H$ given in \eqref{H-bounds}.

We now turn to the condition (iii) of Theorem \ref{th2}. Similarly to (\ref{4.6}), for each $H\in\HH$, $\nu\in \cN$, and $i\in\{1,...,\nu\}$, from (\ref{thm4(iii)}) we have
\begin{multline}\label{4.8}
\E\sum_{n\in \S\cap[1,y]}\left(\sum_{q\in \Q_{H,\nu}}\sum_{h\leq KH}\bl(H;q,n-a_{i,q}-qh)-\frac{|\Q_{H,\nu}|\cdot\lfloor KH\rfloor}{\s_2}\right)^2 \ll \\
\frac{1}{H^{M-2-\eps}}\left(\frac{|\Q_{H,\nu}|\cdot\lfloor KH\rfloor}{\s_2}\right)^2\s y.	
\end{multline}		
Let $\bmE_{H,\nu,i}$ be the set of $n\in \S\cap[1,y]$ such that
\begin{equation}\label{4.9}
\left|\sum_{q\in \Q_{H,\nu}}\sum_{h\leq KH}\bl(H;q,n-a_{i,q}-qh)-\frac{|\Q_{H,\nu}|\cdot\lfloor KH\rfloor}{\s_2}\right| \geq \frac{|\Q_{H,\nu}|\cdot\lfloor KH\rfloor}{\s_2H^{1+\eps}}.	
\end{equation}		
Then, since $M>6$ and $\eps<(M-6)/7$, (\ref{4.8}) implies that
$$
\E|\bmE_{H,\nu,i}| \ll \frac{\s y}{H^{M-4-3\eps}} \ll \frac{\sigma y}{H^2},
$$
and, hence, $|\bmE_{H,\nu,i}|\leq \s y/H^{1+\eps}$ with probability $1-O(H^{-1+\eps})$.
	
Now we estimate the contribution from ``bad'' primes $q\in \Q_{H,\nu}\setminus \bcR_{H,\nu}$. For any $h\leq KH$, we get from Cauchy-Schwarz inequality for vector functions
\begin{multline*}
\E\sum_{q\in \Q_{H,\nu}\setminus\bcR_{H,\nu}}\sum_{n\in \S\cap[1,y]}\bl(H;q,n-a_{i,q}-qh) \leq \\
\left(\E|\Q_{H,\nu}\setminus\bcR_{H,\nu}|\right)^{1/2}\left(\E\sum_{\Q_{H,\nu}\setminus\bcR_{H,\nu}}\left|\sum_{-(K+1)y<n\leq y}\bl(H;q,n)\right|^2\right)^{1/2}, 	
\end{multline*}
where we extended the range of summation of $\bl(H;q,\cdot)$ to the larger interval $(-(K+1)y,y]$ (note that $a_{i,q}+qh\leq q+Ky\leq (K+1)y$ and the weights $\bl(H;q,\cdot)$ are non-negative). Further, by the triangle inequality, (\ref{4.5}) and (\ref{4.7}), 
\begin{multline*}
\E\sum_{\Q_{H,\nu}\setminus\bcR_{H,\nu}}\Bigg|\sum_{-(K+1)y<n\leq y}\bl(H;q,n)\Bigg|^2 \leq \\ 
2\E\sum_{\Q_{H,\nu}\setminus\bcR_{H,\nu}}\Bigg(\Bigg|\sum_{-(K+1)y<n\leq y}\bl(H;q,n)-(K+2)y\Bigg|^2+(K+2)^2y^2\Bigg) \ll \frac{y^2|\Q_{H,\nu}|}{H^{M-4-3\eps}}.	
\end{multline*}
Combining two latter estimates, using \eqref{4.7} again, and summing over all $h\leq KH$,
we get 
$$
\E\sum_{n\in \S\cap[1,y]}\sum_{q\in \Q_{H,\nu}\setminus \bcR_{H,\nu}}\sum_{h\leq KH} \bl(H;q,n-a_{i,q}-qh) \ll \frac{y|\Q_{H,\nu}|}{H^{M-5-3\eps}}.
$$
Let $\bmF_{H,\nu,i}$ be the set of $n\in\S\cap[1,y]$ such that
\begin{equation}\label{4.10} 
\sum_{q\in \Q_{H,\nu}\setminus \bcR_{H,\nu}}\sum_{h\leq KH} \bl(H;q,n-a_{i,q}-qh) \geq \frac{|\Q_{H,\nu}|\cdot\lfloor KH\rfloor}{\s_2H^{1+\eps}}.	
\end{equation} 
Then 
$$
\E|\bmF_{H,\nu,i}| \ll \frac{\s_2y}{H^{M-5-4\eps}} \ll \frac{\s y \log H}{H^{M-5-4\eps}},
$$ 
and, by Markov's inequality,
$$
|\bmF_{H,\nu,i}| \leq \frac{\s y}{H^{1+\eps}} 
$$
with probability $1-O(H^{-(M-6-6\eps)})$. Since $\eps<(M-6)/7$, we have $M-6-6\eps>\eps$, and the last probability becomes $1-O(H^{-\eps})$.
	
Since $\sum_{H\in\HH}H^{-\eps}\ll (\log x)^{-\d\eps}$,
with probability $1-o(1)$ we have that
for all $H\in\HH$, $\nu\in\cN$, and $i\in\{1,...,\nu\}$
that both sets $\bmE_{H,\nu,i}, \bmF_{H,\nu,i}$ have size  at most $(\s y)H^{-1-\eps}$.
	
Now we make a choice of $b \pmod{P(z)}$. We consider the event that 
$|\S \cap [1,y]|\le 2\sigma y$ and that for each $H,\nu,i$,  the sets $\bmE_{H,\nu,i}, \bmF_{H,\nu,i}$ have size at most $(\s y)H^{-1-\eps}$. By the above discussion, this event holds with probability at least $1-o(1)$ as $x\to\infty$ (so this probability is at least, say, $1/2$ whenever $x$ is large enough depending on $\d, M,\xi, K,$ and $\eps$). From now, we fix a $b\bmod{P(z)}$ such that it is so, and thus all of our random sets and weights become deterministic.  
With this choice of $b$ we verify condition (iii) in Theorem \ref{th2}.
	
For fixed $\nu\in\cN$ and $i\in\{1,...,\nu\}$, we set
$$
\cM_{\nu,i}=\Big(S\cap[1,y]\Big)\setminus\bigcup_{H\in\HH}\left(\mE_{H,\nu,i}\cup\mF_{H,\nu,i}\right).
$$
Now we verify (\ref{3.3}) with given $\nu$ and $i$ for $n\in\mathcal{M}_{\nu,i}$. 
By \eqref{H-bounds}, $\sum_{H\in \HH} H^{-1-\eps} \ll (\log x)^{-(1+ \eps)\delta}$, and so
 the number of exceptional elements satisfies
$$
\left|\bigcup_{H\in\HH}\left(\mE_{H,\nu,i}\cup\mF_{H,\nu,i}\right)\right| \ll \frac{\s y}{(\log x)^{(1+\eps)\d}},
$$
which, by \eqref{z-def}, is smaller than $\frac{\rho x}{8 B^2\log x}$ for large $x$.
 We fix arbitrary $n\in \mathcal{M}_{\nu,i}$. For such $n$, the inequalities (\ref{4.9}) and (\ref{4.10}) both fail, and therefore for each $H\in\HH$,
$$
\sum_{q\in \cR_{H,\nu}}\sum_{h\leq KH}\l(H;q,n-a_{i,q}-qh)=\left(1+O_{\leq}\left(\frac{2}{(\log x)^{(1+\eps)\d}}\right)\right) \frac{|\Q_{H,\nu}|\cdot\lfloor KH\rfloor}{\s_2}. 
$$
Summing over all $H\in\HH$, we have
\begin{equation*}
\sum_{q\in \cR^{\nu}}\sum_{h\leq KH_q}\l(H_q;q,n-a_{i,q}-qh)=\left(1+O_{\leq }\left(\frac{2}{(\log x)^{(1+\eps)\d}}\right)\right)C_{2,\nu}(K+2)y
\end{equation*}
with (recall that $\s_2$ depends on $H$)
$$
C_{2,\nu}=\frac{1}{(K+2)y}\sum_{H\in\HH} \frac{|\Q_{H,\nu}|\cdot\lfloor KH\rfloor}{\s_2}.
$$
Note that $C_{2,\nu}$ depends on $x,K,M,\xi,$ and $\d$, but not on $n$. Since 
$$
\lfloor KH \rfloor=KH\big(1+O(1/H)\big)=KH\big(1+O(\log x)^{-\d}\big)
$$
and
$$
\s_2^{-1}=\prod_{H^M<p\leq z}(1-|I_p|/p)^{-1}\sim\frac{\log z}{M\log H},	
$$
we get, using \eqref{Q-asymp}, 
$$
C_{2,\nu}\sim \frac{K}{(K+2)y} \cdot \rho_{\nu}(1-1/\xi)\sum_{H\in\HH}\frac{y/H}{\log x}\cdot\frac{H\log z}{M\log H} \sim \frac{\rho_{\nu}K(1-1/\xi)}{M(K+2)}\sum_{H\in\HH}\frac{1}{\log H},
$$
as $x\to\infty$. Recalling the definition of $\HH$, we see that
$$
C_{2,\nu}\sim \frac{\rho_{\nu}K(1-1/\xi)}{M(K+2)\log \xi}\sum_{j}\frac{1}{j},
$$
where $j$ runs over the interval
$$
\frac{\d\log\log x}{\log \xi} \leq j \leq \frac{(1/2+o(1))\log\log x}{\log \xi}.
$$
We thus obtain
$$	
C_{2,\nu}\sim \frac{\rho_{\nu}K(1-1/\xi)}{M(K+2)\log \xi}\log(1/(2\d)), \quad x\to\infty,
$$
and the claim \eqref{3.3} follows.  
\end{proof}

It remains to establish Theorem \ref{th3}. This is the aim of the last section of the paper.

%
%
\section{Computing correlations}  \label{sec:correlations}
%
%

In this section we prove Theorem \ref{th3}. The claim (i) is exactly \eqref{thm4(i)} and \eqref{thm4(ii)} of Theorem 3 from \cite{sieved}. To verify the claims (ii) and (iii),
we must rework the argument from \cite{sieved} using our new weight function
$\bl$ from \eqref{lambda}.
 For $H\in\HH$, let $\D_H$ be the collection of square-free numbers $D$, all of whose prime divisors lie in $(H^M,z]$. 
For each $D \in \D_H$, let $I_D \subset \Z/D\Z$ be defined as $I_D = \bigcap_{p|D} I_p$.
Further, for $A>0$, let 
\begin{equation}\label{5.1}
E_A(m;H)=\big( \one_{m\ne 0} \big)\sum_{D\in \D_H\setminus\{1\}}\frac{A^{\o(D)}}{D}\one_{m \bmod D \,\in\, I_D-I_D}.
\end{equation}
 Note that $E_A(m;H)=E_A(-m;H)$ for all $m\in \Z$.
 Also, this notation differs slightly from that in \cite{sieved}, in that we include
 the factor $\one_{m\ne 0}$ here.  Our notation then makes in unnecessary to explicitly exclude the case $m=0$ from summations.

\medskip 

We need the following lemmas, which are Lemma 5.1 and Lemma 5.2 of \cite{sieved},
respectively.

\begin{lem}\label{lem5.1} 
	Let $10<H<z^{1/M}$, $1\leq l\leq 10KH$, and $\U\subset \V$ be two finite sets of
	integers with $|\V|=l$. Then 	
	$$
	\P(\U\subset\S_2)=\s_2^{|\U|}\Biggl(1+O\Bigl(|\U|^2H^{-M}+l^{-2}\sum_{v,v'\in\V}E_{2l^2B}(v-v';H)\Bigr)\Biggr).
	$$		
\end{lem}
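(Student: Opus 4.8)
\textbf{Proof proposal for Lemma \ref{lem5.1}.}

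The plan is to compute $\P(\U\subset \S_2)$ by inclusion--exclusion over which of the residue classes in $\bigcup_{H^M<p\le z} I_p$ are hit by elements of $\U$, and then show that the resulting sum factorizes into the main term $\s_2^{|\U|}$ with a controlled error. Recall $\S_2 = S_{H^M,z}+\b_2$ with $\b_2$ uniform mod $P_2 = \prod_{H^M<p\le z}p$; by the Chinese Remainder Theorem the events ``$\b_2 \bmod p$ avoids $I_p - $ (shifts coming from $\U$)'' are independent across the primes $p\mid P_2$. So first I would write, for each prime $p\in(H^M,z]$,
\[
\P\big(\U \cap (\text{forbidden classes mod }p) = \emptyset\big) = \frac{p - |\{\, r \bmod p : r \equiv u - a \ (p),\ u\in\U,\ a\in I_p \,\}|}{p},
\]
i.e. $1 - |(\U - I_p)\bmod p|/p$ where $\U - I_p := \{u-a : u\in\U,\ a\in I_p\}$. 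Multiplying over $p\mid P_2$ gives the exact formula $\P(\U\subset \S_2) = \prod_{H^M<p\le z}\big(1 - |(\U-I_p)\bmod p|/p\big)$.

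Next I would compare this product with $\s_2^{|\U|} = \prod_{p}(1-|I_p|/p)^{|\U|}$. The point is that for a prime $p$, we have $|(\U-I_p)\bmod p| = |\U|\cdot|I_p|$ unless two of the translates $\U - a$ ($a\in I_p$) collide modulo $p$, i.e. unless $p$ divides some nonzero difference $(u-a)-(u'-a') $ with $u,u'\in\U$, $a,a'\in I_p$. Such a prime $p$ satisfies $u-u' \bmod p \in I_p - I_p$ for some pair $u,u'\in\U$ (including the ``diagonal'' pairs where $u=u'$, which force $a\ne a'$ and hence $p \mid$ a nonzero element of $I_p-I_p$, but there are boundedly many such $p$ overall since $|I_p|\le B$). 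For the ``good'' primes the local factor is exactly $(1-|I_p|/p)$ raised to a power close to $|\U|$; taking logarithms, the discrepancy between $\prod_p(1-|(\U-I_p)\bmod p|/p)$ and $\prod_p(1-|I_p|/p)^{|\U|}$ is bounded by $O\!\big(\sum_{\text{bad }p} |\U|^2/p\big)$ plus a term of size $O(|\U|^2 \sum_p 1/p^2)=O(|\U|^2 H^{-M})$ coming from the expansion $-\log(1-t) = t + O(t^2)$ and from the primes exceeding $H^M$ (where $1/p < H^{-M}$, so $\sum_{p>H^M}1/p^2 \ll H^{-M}$, using $p\le z$ being irrelevant for the tail). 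Here the exponent $|\U|$ on $\s_2$ versus the true count requires a little care: writing the good-prime factor as $(1-|I_p|/p)^{|\U|}\cdot(1+O(1/p^2 \cdot (\text{something})))$ and collecting, one sees the error is absorbed into the $O(|\U|^2 H^{-M})$ term as well.

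It remains to bound $\sum_{\text{bad }p} 1/p$. A prime $p$ is bad precisely when $p$ divides some nonzero $v - v''$ with $v = u - a$, $v'' = u'-a'$, $u,u'\in\U$, $a,a'\in I_p$; reorganizing, $v - v'' = (u-u') - (a'-a)$, so $p \mid (u-u')$ shifted by an element of $I_p - I_p$ — that is, $(u-u')\bmod p \in I_p - I_p$. Summing $1/p$ over all such $p$ and all ordered pairs $(u,u')$ with $u\ne u'$ (the diagonal contributing $O(1)$), and recalling $E_A(m;H) \ge A^{\o(p)}/p \ge 1/p$ for any single prime $p\in(H^M,z]$ with $m\bmod p\in I_p-I_p$ (take $D=p$, so $A^{\o(D)}/D = A/p \ge 1/p$), we get
\[
\sum_{\text{bad }p} \frac1p \ \ll\ \sum_{u,u'\in\V,\ u\ne u'} E_{2l^2 B}(u-u';H) \ +\ O(1)\cdot\frac{1}{H^M},
\]
where I enlarge from $\U$ to $\V$ (harmless, since $\U\subset\V$) to match the statement, and the choice of the constant $2l^2B$ in $E$ is dictated by needing $A^{\o(D)}/D \ge$ (the combinatorial multiplicity $\le l^2 B^{\o(D)}$ of representations) $/D$ when one passes from single primes to squarefree $D$ — this is where one must allow $D$ composite rather than just $D$ prime, so that the whole ``bad'' contribution, including higher-order inclusion--exclusion terms with several bad primes simultaneously, is dominated. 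Dividing through by $l^2$ to match the normalization $l^{-2}\sum_{v,v'\in\V}E_{2l^2B}(v-v';H)$ in the statement finishes the estimate.

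\textbf{Main obstacle.} The routine part is the local computation and the $-\log(1-t)$ expansion; the genuinely delicate point is bookkeeping the higher-order inclusion--exclusion terms — configurations where several primes $p_1,\dots,p_k$ are simultaneously ``bad'' — and checking that their total contribution is exactly what the composite-$D$ sum $\sum_{D\in\D_H\setminus\{1\}} A^{\o(D)}/D \cdot \one_{m\bmod D\in I_D-I_D}$ with $A = 2l^2B$ is designed to absorb, including getting the power of $l$ in $A$ and the factor $B^{\o(D)}$ right (there are $\le B$ choices of residue in $I_p$ for each $p\mid D$, and $\le l^2$ choices of the pair $u,u'$). This is precisely the content transcribed from \cite[Lemma 5.1]{sieved}, so I would follow that argument, only verifying that nothing changes when $|I_p|$ is allowed to be a general bounded quantity rather than identically small.
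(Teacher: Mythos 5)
The paper does not prove this lemma at all --- it is imported verbatim as Lemma 5.1 of \cite{sieved} --- so there is no internal proof to compare against; your reconstruction (the exact product over $p\in(H^M,z]$ via CRT and independence of $\b_2\bmod p$, comparison with $(1-|I_p|/p)^{|\U|}$, bad primes detected by $(v-v')\bmod p\in I_p-I_p$, and the composite-$D$ terms of $E_{2l^2B}$ absorbing the multi-prime configurations) is the correct skeleton of that source's argument, with the genuinely delicate step correctly identified and deferred. Two small points of precision: distinct elements of $I_p\subset\Z/p\Z$ are automatically incongruent mod $p$, so your ``diagonal'' collisions with $u=u'$ never occur and need no separate treatment; and ``dividing through by $l^2$'' should instead be phrased as the observation that $(2l^2B)^{\o(D)}\ge 2l^2B$ for $D\ne 1$, whence $l^{-2}E_{2l^2B}(m;H)\ge 2B\sum_p p^{-1}\one_{m\bmod p\in I_p-I_p}$, which is what dominates the first-order bad-prime contribution after accounting for the $\le B$ choices of $(a,a')$ per colliding pair.
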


\begin{lem}\label{lem5.2} 
Let $10<H<z^{1/M}$, $0<AB^2 \le H^M$ and $(m_t)_{t\in T}$ be a finite sequence such that
\begin{equation}\label{5.2}
\sum_{t\in T}\one_{m_t\equiv a\pmod{D}} \ll \frac{X}{\varphi(D)}+R	
\end{equation}
for some $X,R>0$, and all $D\in \D_H\setminus\{1\}$ and $a\in\Z/D\Z$. Then for any integer $j$	
$$
\sum_{t\in T}E_A(m_t+j;H) \ll \frac{XA}{H^M}+R\exp(AB^2\log\log y).
$$
\end{lem}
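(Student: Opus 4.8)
The plan is a direct expansion of \eqref{5.1}, followed by an interchange of summation and two Euler‑product estimates. Dropping the harmless factor $\one_{m_t+j\ne0}$ and interchanging the order of summation,
\[
\sum_{t\in T}E_A(m_t+j;H)\le\sum_{D\in\D_H\setminus\{1\}}\frac{A^{\omega(D)}}{D}\,\#\{t\in T:\ m_t+j\bmod D\in I_D-I_D\}.
\]
For fixed $D\in\D_H\setminus\{1\}$, the Chinese Remainder Theorem gives $I_D=\prod_{p\mid D}I_p$, so $|I_D-I_D|\le|I_D|^2\le B^{2\omega(D)}$ by Hypothesis (b); moreover the condition $m_t+j\bmod D\in I_D-I_D$ confines $m_t$ to at most $|I_D-I_D|$ residue classes modulo $D$, so by \eqref{5.2},
\[
\#\{t\in T:\ m_t+j\bmod D\in I_D-I_D\}\ll B^{2\omega(D)}\Big(\frac{X}{\varphi(D)}+R\Big).
\]
Substituting back, the whole problem reduces to bounding the two sums
\[
X\sum_{D\in\D_H\setminus\{1\}}\frac{(AB^2)^{\omega(D)}}{D\varphi(D)}\qquad\text{and}\qquad R\sum_{D\in\D_H\setminus\{1\}}\frac{(AB^2)^{\omega(D)}}{D},
\]
each of which factors over the primes in $(H^M,z]$.

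For the first sum, since every prime divisor $p$ of $D$ satisfies $p>H^M\ge AB^2$, I would bound it by $\prod_{H^M<p\le z}\big(1+\tfrac{AB^2}{p(p-1)}\big)-1$, whose exponent is at most $AB^2\sum_{n\ge\lceil H^M\rceil}\tfrac1{n(n-1)}\ll AB^2/H^M\ll1$; since $e^u-1\ll u$ for bounded $u$, the first sum is $\ll AB^2/H^M$, contributing $\ll XA/H^M$ (the constant absorbing the fixed $B$). For the second sum I would likewise use
\[
\sum_{D\in\D_H\setminus\{1\}}\frac{(AB^2)^{\omega(D)}}{D}\le\prod_{H^M<p\le z}\Big(1+\frac{AB^2}{p}\Big)\le\exp\Big(AB^2\sum_{H^M<p\le z}\frac1p\Big).
\]
The crucial point is that the prime range $(H^M,z]$ is short on the $\log\log$ scale: by Mertens' theorem $\sum_{H^M<p\le z}\tfrac1p=\log\log z-\log\log(H^M)+O(1)$, and since $z<y$ gives $\log\log z<\log\log y$ while $\log\log(H^M)=\log(M\log H)\ge\log(6\log10)>2.6$ dominates the $O(1)$ error, one obtains $\sum_{H^M<p\le z}\tfrac1p<\log\log y$ for all large $x$. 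Hence the second contribution is $\le R\exp(AB^2\log\log y)$, and adding the two bounds proves the lemma.

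The one delicate step, which I expect to be the main obstacle, is precisely this second sum: the naive estimate $\sum_{H^M<p\le z}\tfrac1p\le\sum_{p\le z}\tfrac1p=\log\log z+M_0+o(1)$ would leave behind a spurious factor $\exp(M_0AB^2)$, which is fatal when $AB^2$ is as large as $H^M$. The resolution is to exploit that every prime factor of $D$ exceeds $H^M$, so that the Mertens constant $M_0$ is cancelled by the subtracted term $\log\log(H^M)$, which is comfortably larger than $M_0$ since $H>10$ and $M>6$. One should also verify that the Mertens error terms at the two endpoints are bounded by an absolute constant, which holds because $H^M\ge10^6$ and $z\to\infty$; the hypothesis $H<z^{1/M}$ is what guarantees the range $(H^M,z]$ is non‑degenerate.
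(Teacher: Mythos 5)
Your argument is correct and is essentially the standard proof: the paper itself states this lemma without proof, importing it as Lemma 5.2 of \cite{sieved}, and the proof there proceeds exactly as you do — expand $E_A$, bound the count of $t$ with $m_t+j\bmod D\in I_D-I_D$ by $B^{2\omega(D)}(X/\varphi(D)+R)$ via \eqref{5.2}, and estimate the two resulting Euler products over primes in $(H^M,z]$. Your observation that the lower cutoff $H^M$ is what prevents a fatal $\exp(O(AB^2))$ loss in the $R$-term is exactly the right delicate point.
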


For the rest of the paper we use the notation  
$$
A=A(H)=8B^3K^2H^2
$$ 
for the brevity, and also for prime $q$, define the set
$$
\A_q=\Big\{a_{i,q}-a_{j,q}\,\big|\, 1\leq i\leq j\leq \nu \Big\}.
$$ 
Recalling that $a_{i,q}\in[1,q]$, we see that $\A_q\subset [1-q,q-1]$.

 We will need the following bound, which is where we deploy Hypothesis (g).

\begin{lem}\label{lem:sumE}
Let $\nu\in\cN$ and $H\in \HH$. 
For $q\in \Q_{H,\nu}$, suppose that $m_q \bmod q \in I_q-I_q$
with $0<|m_q| \le x\log x$, and suppose that
$w\in \Z$ with $|w| \le x\log x$.  Then	
$$
\sum_{q\in\Q_{H,\nu}}E_{A(H)}(m_q+w;H)	\ll \frac{|\Q_{H,\nu}| \log H}{H^{M-2}}.
$$
\end{lem}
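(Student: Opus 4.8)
The plan is to estimate $\sum_{q\in\Q_{H,\nu}}E_{A(H)}(m_q+w;H)$ by expanding $E_A$ via its definition \eqref{5.1} as a sum over squarefree $D\in\D_H\setminus\{1\}$ with prime factors in $(H^M,z]$, and swapping the order of summation so that the inner sum is over $q\in\Q_{H,\nu}$. For each fixed $D$, the inner sum counts $q$ for which $(m_q+w)\bmod D\in I_D-I_D$; equivalently, for which every prime $p\mid D$ satisfies $m_q+w\bmod p\in I_p-I_p$. Thus the number of such $q$ is at most $\#\{q\in\Q_{H,\nu}: N(m_q+w)\ge \omega(D)\}$ together with the constraint $m_q+w\ne 0$ — here is precisely where Hypothesis (g) enters, applied with $u\asymp x\log x$ (so that $\log u\asymp\log x$ and all $|m_q|,|w|$ are $\le u$) and $k=\omega(D)$, giving a bound $\ll u(\log u)^{c_1}e^{-c_2\omega(D)}$. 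One must also account separately for the $O(\log u)$ values of $q$ with $m_q+w=0$, which contribute a harmless amount.

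Next I would insert this bound and sum over $D$. The contribution is
\[
\sum_{D\in\D_H\setminus\{1\}}\frac{A^{\omega(D)}}{D}\cdot u(\log u)^{c_1}e^{-c_2\omega(D)}
\ll u(\log u)^{c_1}\sum_{D\in\D_H\setminus\{1\}}\frac{(A e^{-c_2})^{\omega(D)}}{D}.
\]
The last sum, being over squarefree $D$ with all prime factors in $(H^M,z]$, is bounded by $\prod_{H^M<p\le z}\bigl(1+Ae^{-c_2}/p\bigr)-1$. Since $A=A(H)=8B^3K^2H^2$ is polynomial in $H$, but we are looking at primes $p>H^M$ with $M>6$ large, the ``weight'' $Ae^{-c_2}/p$ is small, and $\sum_{H^M<p\le z}A/p\ll A(\log\log z-\log\log H^M)/1 \ll A\log(1/1)$... more carefully, $\sum_{H^M<p\le z}1/p=\log\frac{\log z}{M\log H}+o(1)\ll \log\log x$, so the product is $\exp(O(A\log\log x))-1$. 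This is far too large by itself — so the naive expansion must be sharpened.

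The fix, and the step I expect to be the main obstacle, is that we should not apply Hypothesis (g) for all $D$ uniformly but rather split according to $\omega(D)$ and more importantly use that for the bulk of $D$ only the smallest prime factor (near $H^M$) really contributes the savings. I would instead bound the inner $q$-sum more cleverly: isolate one prime factor $p_0\mid D$, note $p_0>H^M$, and for the remaining part $D/p_0$ use a trivial divisor-type bound on the number of $q$ (there are $|\Q_{H,\nu}|\ll y/(H\log x)$ primes total, and the condition $m_q+w\bmod(D/p_0)\in I_{D/p_0}-I_{D/p_0}$ restricts to those $q$ with $N(m_q+w)\ge\omega(D/p_0)$). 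Summing geometrically with the factor $1/p_0<H^{-M}$ from the isolated prime produces the gain $H^{-M}$, while the $H^2$ from $A$ and a $\log H$ from the remaining sum over $p_0\in(H^M,z]$ combine to give exactly $|\Q_{H,\nu}|(\log H)/H^{M-2}$. Concretely this is the same mechanism as in Lemma \ref{lem5.2} (the bound $XA/H^M+R\exp(AB^2\log\log y)$), so the cleanest route is: verify that the sequence $(m_q+w)_{q\in\Q_{H,\nu}}$ satisfies a distribution estimate of the form \eqref{5.2} with $X\ll|\Q_{H,\nu}|$ and a small remainder $R$, with the key input being that for any fixed residue $a\bmod D$ the count $\#\{q\in\Q_{H,\nu}: m_q+w\equiv a\}$ is controlled — using $q\mid F(m_q)$ and Hypothesis (g) to bound how often $N(m_q+w)$ is large — then invoke Lemma \ref{lem5.2} directly with $A=A(H)$ (checking $AB^2\le H^M$, which holds since $M>6$ and $H$ is large relative to the fixed parameters), obtaining $\sum_q E_A(m_q+w;H)\ll |\Q_{H,\nu}|A/H^M+R\exp(AB^2\log\log y)$, and finally absorbing $A/H^M\ll H^2/H^M$ together with the $\log H$ losses. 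The delicate point is making the remainder term $R$ genuinely negligible against $\exp(AB^2\log\log y)$ — this is exactly what Hypothesis (g)'s exponential decay $e^{-c_2k}$ buys, and balancing $k$ against $\omega(D)$ to beat the $\exp(AB^2\log\log y)$ factor is the heart of the argument.
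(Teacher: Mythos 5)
Your proposal correctly identifies the two relevant ingredients (every prime entering $E_A$ exceeds $H^M$, and Hypothesis (g) controls how often $N(m_q+w)$ is large), but the route you actually commit to does not close. You propose to verify the distribution estimate \eqref{5.2} for the sequence $(m_q+w)_{q\in\Q_{H,\nu}}$ with $X\ll|\Q_{H,\nu}|$ and small $R$, and then invoke Lemma \ref{lem5.2}. That estimate is not available here: the $m_q$ in the lemma are \emph{arbitrary} integers subject only to $m_q\bmod q\in I_q-I_q$ and $0<|m_q|\le x\log x$, so there is no equidistribution of $m_q+w$ modulo a general $D\in\D_H$ (which can be a product of many primes up to $z$, far beyond any range where even primes $q$ equidistribute), and $R$ would have to absorb counts that are not bounded. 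Hypothesis (g) bounds $\#\{q:N(m_q+w)\ge k\}$, which is a different statement from residue equidistribution, and your appeal to $q\mid F(m_q)$ is not available either, since the lemma is proved for general sieving systems, not just polynomial ones. Your closing sentence concedes that ``balancing $k$ against $\omega(D)$ \ldots{} is the heart of the argument'' --- that heart is missing.

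The paper's proof avoids the $D$-expansion entirely. It bounds pointwise
\[
E_A(m_q+w;H)\le\exp\Bigl(A\ssum{m_q+w\bmod p\in I_p-I_p\\ H^M<p\le z}\frac1p\Bigr)-1\le\exp\bigl(A\,N(m_q+w)H^{-M}\bigr)-1,
\]
since each contributing prime exceeds $H^M$. For the $q$ with $N(m_q+w)\le c_3\log H$ this gives $E_A\ll A\log H/H^M\ll\log H/H^{M-2}$ termwise, yielding the main bound; for the exceptional $q$ one sums over $k>c_3\log H$, using Hypothesis (g) to bound the number of $q$ with $N(m_q+w)=k$ by $\ll x(\log x)^{c_1+1}e^{-c_2k}$ and the pointwise bound $e^{Ak/H^M}$ with $A/H^M=O(H^{-(M-2)})=o(1)$, so the geometric sum is dominated by $k\asymp c_3\log H$ and is $\ll|\Q_{H,\nu}|H^{-(M-2)}$ once $c_3$ is large. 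If you want to salvage your write-up, replace the Lemma \ref{lem5.2} reduction with this truncation of $N(m_q+w)$ at $c_3\log H$.
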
	

\begin{proof} 
If $m_q \bmod D \in I_D-I_D$, then $m_q \bmod p \in I_p-I_p$ for each $p|D$.
Thus, if $m_q+w\ne 0$ then
\begin{equation}\label{5.3}
\begin{split}
E_A(m_q+w;H)&=\prod_{\substack{m_q +w \bmod p \,\in\, I_p-I_p\\H^M<p\leq z}}\left(1+\frac{A}{p}\right)-1 \\
&\leq \exp\Bigg(A\sum_{\substack{m_q+w \bmod p \,\in\, I_p-I_p \\ H^M<p\leq z}\;}\frac1p\Bigg)-1.
\end{split}
\end{equation}
Recall the notation $N(m)$ from Hypothesis (g).
Thus, the number of primes $p$ with $H^M < p \le z$ and with $m_q+w \bmod p \,\in\, I_p-I_p$ is at most $N(m_q+w)$.
Let $c_3$ be a sufficiently large constant, depending on $c_1$ and $c_2$ from Hypothesis (g), and let
$$
\widetilde{\Q}_{H,\nu}=\big\{q\in\Q_{H,\nu}: N(m_q+w) \leq c_3 \log H \big\}.
$$
Clearly, for $q\in\widetilde{\Q}_{H,\nu}$ we have 
$$
\sum_{\substack{m_q+w \bmod p \,\in\, I_p-I_p \\ H^M<p\leq z}} \frac1p \leq \frac{c_3\log H}{H^M}.
$$
Therefore, using the fact that $A=O(H^2)$,
\[
\ssum{q\in\widetilde{\Q}_{H,\nu}} E_A(m_q+w;H) \ll |\widetilde{\Q}_{H,\nu}|
\Big(\exp(O((\log H) H^{-(M-2)}))-1\Big)  \ll \frac{|\Q_{H,\nu}|\log H}{H^{M-2}}. 
\]
Using Hypothesis (g), for some positive constants $c_1,c_2$ (depending only
on the sieving system),
\begin{align*}
\ssum{q\in \Q_{H,\nu}\setminus \widetilde{\Q}_{H,\nu}}E_A(m_q+w;H) &\le 
\sum_{k>c_3\log H} \#\big\{q\in \Q_{H,\nu}: m_q+w\ne 0,\, N(m_q+w)=k\big\} e^{Ak/H^{M}} \\
&\ll x (\log x)^{c_1+1} \sum_{k>c_3 \log H} e^{-c_2 k}\exp\Big(O(kH^{-(M-2)})\Big) \\
&\ll x (\log x)^{c_1+1} e^{-c_2 c_3 \log H}  \\
&\ll |\Q_{H,\nu}| H^{-(M-2)},
\end{align*}
if $c_3$ is large enough, using the lower bound  $H\geq (\log x)^{\d}$ 
 from \eqref{H-bounds} and the asymptotic \eqref{Q-asymp}; recall also that $6<M\leq 7$.  This concludes the proof. 
\end{proof}

Now we fix $H\in\HH$ and $\nu\in\cN$ for the rest of the paper. We start with the proof of part (ii) in the case $j=1$ (the case $j=0$ being trivial), which is 
\begin{equation}\label{5.5}
\E\sum_{q\in \Q_{H,\nu}}\sum_{-(K+1)y<n\leq y}\bl(H;q,n) = \left(1+O\left(\frac{\log H}{H^{M-2}}\right)\right)(K+2)y|\Q_{H,\nu}|.
\end{equation}	
By \eqref{lambda}, the left-hand side expands as
\begin{equation*}
\E\sum_{q\in\Q_{H,\nu}}\;\sum_{-(K+1)y<n\leq y}\frac{\one_{\bAP(KH;q,n)\subset\S_2}}{\s_2^{|\bAP(KH;q,n)|}}.
\end{equation*}
Recall that, according to the definitions (\ref{2.9}) and (\ref{2.10}), $\b_1$ and $\b_2$ are independent, and so are $\bAP(KH; q,n)$ and $\S_2$.  With $\b_1$ fixed,
 $\bAP(KH;q,n)$ is also fixed and we will denote it as $AP(KH;q,n)$.
 Then the above expression equals
$$  
\sum_{q\in\Q_{H,\nu}}\;\sum_{-(K+1)y<n\leq y}\;\sum_{b_1\bmod {P_1}}\;\frac{\P(\b_1=b_1)}{\s_2^{|\AP(KH;q,n)|}}\P(\AP(KH; q,n)\subset\S_2).
$$
For fixed $q$, $n$, and $b_1$, we apply Lemma \ref{lem5.1} to the sets $\U=\AP(KH;q,n)$ and 
$$
\V=\bigsqcup_{i=1}^{\nu}\{n+a_{i,q}+qh: 1\leq h\leq KH\},
$$ 
so that $l=|\V|=\nu \fl{KH}\asymp H$.
Since $E_A(m;H)$ is an increasing function of $A$,
we find that the left-hand side of (\ref{5.5}) is equal to
\begin{multline}\label{ii-j1-errors}
\sum_{q\in \Q_{H,\nu}}\sum_{-(K+1)y<n\leq y}\Big[1+O\big(H^{-(M-2)}\big)\Big]+\\
+O\bigg( yH^{-2} \sum_{q\in \Q_{H,\nu}}\; \sum_{a\in \A_q}\;
\ssum{1\leq h\le h'\leq KH} E_{A(H)}(a+qh-qh';H)\bigg).
\end{multline}
We note that $|a+q(h-h')|\le (K+1)y \le x\log x$ for $a\in\A_q$ and large $x$.
When $h$ and $h'$ are fixed, we apply Lemma \ref{lem:sumE} with $w=0$
and $m_q=a+qh-qh'$, where we've chosen one of the $O(1)$ elements $a\in \cA_q$
for each $q$.
Thus, we see that the second line in \eqref{ii-j1-errors} is
\[
\ll (y H^{-2}) H^2 |\cQ_{H,\nu}| \cdot (\log H) H^{-M+2} = \frac{y |\cQ_{H,\nu}| \log H}{H^{M-2}}.
\]
This proves the $j=1$ case of part (ii) in Theorem \ref{th3}, that is, \eqref{5.5}.
\bigskip

Now we turn to the case $j=2$ of (ii), which is 
$$
\E\sum_{q\in \Q_{H,\nu}}\bigg(\sum_{-(K+1)y<n\leq y}\bl(H; q,n)\bigg)^2=
\left(1+O\left(\frac{\log H}{H^{M-2}}\right)\right)(K+2)^2y^2|\Q_{H,\nu}|. 
$$ 
The left-hand side is expanded as
$$
\E\sum_{q\in\Q_{H,\nu}} \;\sum_{-(K+1)y<n_1,n_2\leq y}\frac{\one_{\bAP(KH;q,n_1)\cup \bAP(KH;q,n_2)\subset \S_2}}{\s_2^{|\bAP(KH;q,n_1)|+|\bAP(KH;q,n_2)|}}. 
$$ 
For fixed $q,n_1,n_2$, we will apply Lemma \ref{lem5.1} with
$$
U=\bAP(KH;q,n_1)\cup \bAP(KH;q,n_2)
$$
and
$$
V=V_1\cup V_2,
$$
where
$$
V_j=\bigsqcup_{i=1}^{\nu}\{n_j+a_{i,q}+qh: 1\leq h\leq KH\}, \quad j=1,2. 
$$
We first estimate the contribution of the triples $(n_1,n_2,q)$ for which $V_1$ and $V_2$ have non-empty intersection. This implies that $(n_1-n_2) \bmod q \in \A_q$, and, hence, there are $O(yH)$ such pairs $n_1,n_2$ for each $q$. Each of them contributes at most $\s_2^{-4KH}=y^{o(1)}$, so the total contribution of such triples is 
$O(y^{1+o(1)} |\Q_{H,\nu}|)$, which is negligible. Thus we may restrict our attention 
to those triples $(n_1,n_2,q)$ for which the sets $V_1$ and $V_2$ do not intersect; let us call these triples \textit{good}.
 In particular, for any good triple $(n_1,n_2,q)$, the sets $\bAP(KH;q,n_1)$ and 
 $\bAP(KH;q,n_2)$ also do not intersect. Then it is enough to show that
\begin{multline}\label{5.7}
\E\sum_{q\in\Q_{H,\nu}}\;\sum_{\substack{-(K+1)y<n_1,n_2\leq y\\(n_1,n_2,q) \, \text {good}}}\frac{\one_{\bAP(KH;q,n_1)\sqcup \bAP(KH;q,n_2)\subset\S_2}}{\s_2^{|\bAP(KH;q,n_1)|+|\bAP(KH;q,n_2)|}}\\
=\left(1+O\left(\frac{\log H}{H^{M-2}}\right)\right)(K+2)^2y^2|\Q_{H,\nu}|.
\end{multline}
Arguing as in the case $j=1$, we see that the left-hand side of (\ref{5.7}) equals 
\begin{equation}\label{5.8} 
\begin{split}
&\sum_{q\in\Q_{H,\nu}}\sum_{\substack{-(K+1)y<n_1,n_2\leq y\\(n_1,n_2,q)\, \text {good}}}\Bigl(1+O\Bigl(\frac{1}{H^{M-2}}\Big) \Big)+ \\
&\qquad \qquad + O \Bigg(\frac{1}{H^2}
\sum_{q\in\Q_{H,\nu}}\;\sum_{-(K+1)y<n_1,n_2\leq y}R_0(n_1,n_2,q)\Bigg),
\end{split}
\end{equation}
where
\begin{align*}
	R_0(n_1,n_2,q) &:= \ssum{v,v' \in V \\ v\ne v'} E_{A(H)}(v-v';H) \\
	&\ll \sum_{1\le h,h' \le KH} \; \sum_{a\in \cA_q} E_{A(H)}(n_1-n_2+a+q(h-h');H)\\
	&\ll \ssum{1\le h,h' \le KH \\ a\in \cA_q \\ a\ne 0\text{ or } h\ne h'} E_{A(H)}(n_1-n_2+a+q(h-h');H) + H\, E_{A(H)} (n_1-n_2) \\
	&= R_1(n_1,n_2,q) + H\, E_{A(H)} (n_1-n_2),
\end{align*}
say.  
Recalling that all but $O(yH)$ pairs $(n_1,n_2)$ are good, we get the main term $(K+2)^2y^2|\Q_{H,\nu}|$ from the first line of (\ref{5.8}), with an acceptable error
term.

We next estimate the contribution from $R_1(n_1,n_2,q)$.
With $n_1,n_2,h_1,h_2$ fixed and also fixing one of the $O(1)$ choices for $a\in A_q$ for
each $q\in \cQ_{H,\nu}$, we estimate $\sum_{q\in \cQ_{H,\nu}} R_1(n_1,n_2,q)$
using Lemma \ref{lem:sumE} with $w=n_1-n_2$ and $m_q=a+q(h-h')$. 
Since either $a\ne 0$ or $h\ne h'$, we have $m_q \ne 0$.
Also, for large $x$, $|w|\le x\log x$ and $|m_q| \le x\log x$.
  Therefore,
\begin{align*}
\sum_{n_1,n_2} \; \sum_{q\in \cQ_{H,\nu}} R_1(n_1,n_2,q) &\ll H^2 y^2\, \frac{|\cQ_{H,\nu}|\log H}{H^{M-2}},
\end{align*}
which is acceptable for \eqref{5.7}.
To estimate the contribution from $E_{A(H)}(n_1-n_2;H)$,
we apply Lemma \ref{lem5.2}, by first fixing $n_2$ and 
observing that \eqref{5.2} holds with $X=y$ and $R=1$.
Therefore, recalling that $A(H) \ll H^2$,
\begin{align*}
\sum_{q\in \cQ_{H,\nu}} \; \sum_{-(K+1)y < n_1,n_2 \le y}\; E_{A(H)}(n_1-n_2;H) &\ll
|\cQ_{H,\nu}|\, y \bigg( \frac{y}{H^{M-2}} + e^{AB^2\log\log y} \bigg) \\
&\ll \frac{y^2|\cQ_{H,\nu}|}{H^{M-2}},
\end{align*}
which is also acceptable for \eqref{5.7}.
This gives \eqref{5.7}, as desired, completing the $j=2$ case of (ii).

\smallskip

\begin{proof}[Proof of (iii)]  Fix $H\in \HH$, $\nu \in \cN$ and $1\le i\le \nu$.
 The case $j=0$ follows from part (i), so we focus on the case $j=1$, which states
$$
\E\sum_{n\in \S\cap[1,y]}\; \sum_{q\in \Q_{H,\nu}}\sum_{h\leq KH}\bl(H;q,n-a_{i,q}-qh) = \left(1+O\left(\frac{\log H}{H^{M-2}}\right)\right)|\Q_{H,\nu}|\cdot\lfloor KH\rfloor\s_1 y.
$$	
It is enough to show that, for any $h\leq KH$,
\begin{equation}\label{5.9}
\E\sum_{n\in \S\cap[1,y]}\sum_{q\in \Q_{H,\nu}}\bl(H;q,n-a_{i,q}-qh) = \left(1+O\left(\frac{\log H}{H^{M-2}}\right)\right)|\Q_{H,\nu}|\s_1 y.
\end{equation}
According to \eqref{lambda}, the left-hand side is equal to
$$
\E\sum_{n\in\S\cap[1,y]}\sum_{q\in \Q_{H,\nu}}\frac{\one_{\bAP(KH;q,n-a_{i,q}-qh)\subset \S_2}}{\s_2^{|\bAP(KH;q,n-a_{i,q}-qh)|}}
$$
By (\ref{2.11}), the condition $n\in \S\cap[1,y]$ implies that $n\in\S_1\cap[1,y]$. On the other hand, if $n\in\S_1$, then $n\in\bAP(KH;q,n-a_{i,q}-qh)$, and thus the condition $n\in\S_2$ is contained in the condition $\bAP(KH;q,n-a_{i,q}-qh)\subset\S_2$. So the left-hand side of (\ref{5.9}) can be rewritten as
$$
\E\sum_{n\in\S_1\cap[1,y]}\sum_{q\in \Q_{H,\nu}}\frac{\one_{\bAP(KH;q,n-a_{i,q}-qh)\subset \S_2}}{\s_2^{|\bAP(KH;q,n-a_{i,q}-qh)|}}.
$$
Recalling that $\S_2$ is independent of $\S_1$ and of $\bAP(KH;q,n-a_{i,q}-qh)$, we may apply Lemma \ref{lem5.1} as before and find that the left-hand side of (\ref{5.9}) is
$$
\E\sum_{n\in\S_1\cap[1,y]}\sum_{q\in \Q_{H,\nu}}\Bigg(1+O\Bigg(\frac{1}{H^{M-2}}+H^{-2}\sum_{a\in\A_q}\;\sum_{h',h''\leq KH}E_{A(H)}(a+qh'-qh'')\Bigg)\Bigg).
$$
Recall that $\E |\S_1 \cap [1,y]|=\sigma_1 y$ by Theorem \ref{th3} (i).
Thus, we see that (\ref{5.9}) follows from Lemma \ref{lem:sumE},
applied with $n,h',h''$ fixed, $w=0$, some choice of $a\in \cA_q$ for each $q$,
and $m_q=a+qh'-qh''$.
	
\medskip
	
Now we turn to the case $j=2$ of (iii), which states
\begin{multline}\label{iii:j=2}
\sum_{h_1,h_2\leq KH}\E\sum_{n\in\S\cap[1,y]}\;\sum_{q_1,q_2\in\Q_{H,\nu}}\bl(H;q_1,n-a_{i,q_1}-q_1h_1)\bl(H;q_2,n-a_{i,q_2}-q_2h_2)\\
=\left(1+O\left(\frac{1}{H^{M-2}}\right)\right)|\Q_{H,\nu}|^2\cdot\lfloor KH\rfloor^2\frac{\s_1}{\s_2}y.	
\end{multline}	
Arguing as in the $j=1$ case, the left-hand side equals
\begin{equation}\label{5.11}
\sum_{h_1,h_2\leq KH}\E\sum_{n\in\S_1\cap[1,y]}\;\sum_{q_1,q_2\in \Q_{H,\nu}}\frac{\one_{\bAP(KH;q_1,n-a_{i,q_1}-q_1h_1)\cup \bAP(KH;q_2,n-a_{i,q_2}-q_2h_2)\subset \S_2}}{\s_2^{|\bAP(KH;q_1,n-a_{i,q_1}-q_1h_1)|+|\bAP(KH;q_2,n-a_{i,q_2}-q_2h_2)|}};
\end{equation}
Note that here we again replace the condition $n\in\S\cap[1,y]$ by $n\in\S_1\cap[1,y]$ for the same reason as in $j=1$ case. Further, by \eqref{thm4(i)}, the contribution from $q_1=q_2$ is
$$ 
\ll H^2\s_2^{-2BKH}|\Q_{H,\nu}|\s_1y\ll |\Q_{H,\nu}|y^{1+o(1)}, 
$$
which, by \eqref{Q-asymp}, is an acceptable error term.

 We call a pair $(q_1,q_2)\in\Q_{H,\nu}^2$ with $q_1\neq q_2$ \textit{good}, if 
 for all $\S_1$, all $n\in \S_1 \cap [1,y]$ and all $h_1,h_2 \le KH$ we have
$$
\{n\} = \bAP(KH;q_1,n-a_{i,q_1}-q_1h_1)\cap\bAP(KH;q_2,n-a_{i,q_2}-q_2h_2),
$$ 
and call $(q_1,q_2)$ \textit{bad} otherwise;
recall that for any  $n\in \S_1 \cap [1,y]$, $n$ lies in both
$\bAP(KH;q_1,n-a_{i,q_1}-q_1h_1)$ and $\bAP(KH;q_2,n-a_{i,q_2}-q_2h_2)$.
 We need to estimate the number of bad pairs. First of all, if a pair $(q_1,q_2)$ is bad
 then there is a choice of $h_1,h_2$ so that both sets
$$
\bigsqcup_{j_1=1}^{\nu}\{a_{j_1,q_1}-a_{i,q_1}+q_1(h_1''-h_1): h_1''\leq KH\}  
$$ 
and
$$
\bigsqcup_{j_2=1}^{\nu}\{a_{j_2,q_2}-a_{i,q_2}+q_2(h_2''-h_2): h_2''\leq KH\}  
$$ 
contain the same nonzero number, say, $n_0$.
Fix $q_2$, $j_2$, $h_2$ and $h_2''$ so that 
\[
n_0 = a_{j_2,q_2}-a_{i,q_2}+q_2(h_2''-h_2).
\]
Then we have $n_0 \bmod q_1 \in I_{q_1}-I_{q_1}$.  By Hypothesis (f), the number of such
$q_1$ is $O(y^{0.49})$.  Therefore, the number of bad pairs $(q_1,q_2)$ is $ \ll y^{1.49}H^2 \ll y^{1.5}$.
 Since each of them contributes $y^{1+o(1)}$ to the left side of \eqref{5.11}, the contribution from these bad pairs is negligible.
	
It remains to estimate the contribution to \eqref{5.11} from good pairs
 $(q_1,q_2)$. Note that if $(q_1,q_2)$ is a good pair, then, for any 
 $\S_1, h_1,h_2,n$ the set
$$
\bAP(KH;q_1,n-a_{i,q_1}-q_1h_1)\cup \bAP(KH;q_2,n-a_{i,q_2}-q_2h_2)
$$
has size $|\bAP(KH;q_1,n-a_{i,q_1}-q_1h_1)|+|\bAP(KH;q_2,n-a_{i,q_2}-q_2h_2)|-1$. Then, as before, we can apply Lemma \ref{lem5.1} to rewrite the terms in (\ref{5.11}) corresponding to good $(q_1,q_2)$ as
\begin{multline}\label{E'E''}
\frac{\sigma y}{\sigma_2^2} \sum_{(q_1,q_2)\text{ good}}\; \sum_{h_1,h_2\le KH}\;\bigg(1+O\bigg(\frac{1}{H^{M-2}}\bigg) \bigg) + \\
+ O\Bigg( \frac{\sigma y}{\sigma_2^2 H^2} \sum_{q_1,q_2 \in \cQ_{H,\nu}}\; \bigg(
 H^2 E'(q_1)+ H^2 E'(q_2)+  E''(q_1,q_2)\bigg) \Bigg),
\end{multline}
where again we used that $\E |\S_1 \cap [1,y]| = \sigma_1 y$ from \eqref{thm4(i)},
that $\sigma_1/\sigma_2 = \sigma/\sigma_2^2$ and where we define
\[
E'(q)=\sum_{a\in \A_q}\; \sum_{h,h'\leq KH} E_{A(H)}(a+qh-qh';H)
\]
and
$$
E''(q_1,q_2)= \sum_{1\le h_1,h_2\le KH} \; \ssum{a_1 \in \cA_{q_1} \\ a_2\in \cA_{q_2}}\; \sum_{\substack{h_1',h_2'\leq KH}} E_{A(H)}(a_1-a_2+q_1h_1'-q_1h_1-q_2h_2'+q_2h_2;H).
$$

As the number of bad pairs $(q_1,q_2)$ is very small, the first line 
of \eqref{E'E''} produces the main term in \eqref{thm4(iii)} with an acceptable error.

By Lemma \ref{lem:sumE} with $w=0$, 
\be\label{E'}
\sum_{q_1,q_2\in \cQ_{H,\nu}} ( E'(q_1)+E'(q_2) ) \ll H^2 \, \frac{|\cQ_{H,\nu}|^2\log H}{H^{M-2}}.
\ee

For the sum on $E''(\cdot)$, if we have $a_1=a_2=h_1'-h_1=h_2'-h_2=0$
then the summand is zero for any $q_1,q_2$. 
Consider now the summands with either $a_1\ne 0$ or $h_1\ne h_1'$.
Fix $h_1,h_2,h_1',h_2',q_2,a_2$ and also a choice $a_1\in \cA_{q_1}$ for each
$q_1\in \cQ_{H,\nu}$.  Apply Lemma \ref{lem:sumE} to the sum over $q_1$,
with $w=-a_2-q_2 h_2' + q_2 h_2$ and $m_q = a_1 + q_1(h_1'-h_1)$ so that
$m_q\ne 0$.  A similar argument handles the case when $a_2\ne 0$ or $h_2\ne h_2'$, 
that is, fixing $q_1,a_1$ and summing over $q_2$, and we conclude that
\be\label{E''}
 \sum_{q_1,q_2 \in \cQ_{H,\nu}} \; E''(q_1,q_2) \ll H^4 \,  \frac{|\cQ_{H,\nu}|^2\log H}{H^{M-2}}.
\ee
Inserting \eqref{E'} and \eqref{E''} into \eqref{E'E''} establishes 
the desired bound \eqref{iii:j=2}.

 This completes the proof of the case $j=2$, and Theorem \ref{th3} (iii) follows.

\end{proof}

\end{document}